\begin{document}

\newtheorem{theorem}[subsection]{Theorem}
\newtheorem{proposition}[subsection]{Proposition}
\newtheorem{lemma}[subsection]{Lemma}
\newtheorem{corollary}[subsection]{Corollary}
\newtheorem{conjecture}[subsection]{Conjecture}
\newtheorem{prop}[subsection]{Proposition}
\numberwithin{equation}{section}
\newcommand{\mr}{\ensuremath{\mathbb R}}
\newcommand{\mc}{\ensuremath{\mathbb C}}
\newcommand{\dif}{\mathrm{d}}
\newcommand{\intz}{\mathbb{Z}}
\newcommand{\ratq}{\mathbb{Q}}
\newcommand{\natn}{\mathbb{N}}
\newcommand{\comc}{\mathbb{C}}
\newcommand{\rear}{\mathbb{R}}
\newcommand{\prip}{\mathbb{P}}
\newcommand{\uph}{\mathbb{H}}
\newcommand{\fief}{\mathbb{F}}
\newcommand{\majorarc}{\mathfrak{M}}
\newcommand{\minorarc}{\mathfrak{m}}
\newcommand{\sings}{\mathfrak{S}}
\newcommand{\fA}{\ensuremath{\mathfrak A}}
\newcommand{\mn}{\ensuremath{\mathbb N}}
\newcommand{\mq}{\ensuremath{\mathbb Q}}
\newcommand{\half}{\tfrac{1}{2}}
\newcommand{\f}{f\times \chi}
\newcommand{\summ}{\mathop{{\sum}^{\star}}}
\newcommand{\chiq}{\chi \bmod q}
\newcommand{\chidb}{\chi \bmod db}
\newcommand{\chid}{\chi \bmod d}
\newcommand{\sym}{\text{sym}^2}
\newcommand{\hhalf}{\tfrac{1}{2}}
\newcommand{\sumstar}{\sideset{}{^*}\sum}
\newcommand{\sumprime}{\sideset{}{'}\sum}
\newcommand{\sumprimeprime}{\sideset{}{''}\sum}
\newcommand{\shortmod}{\ensuremath{\negthickspace \negthickspace \negthickspace \pmod}}
\newcommand{\V}{V\left(\frac{nm}{q^2}\right)}
\newcommand{\sumi}{\mathop{{\sum}^{\dagger}}}
\newcommand{\mz}{\ensuremath{\mathbb Z}}
\newcommand{\leg}[2]{\left(\frac{#1}{#2}\right)}
\newcommand{\muK}{\mu_{\omega}}

\title[Low-lying zeros of quadratic and quartic Hecke $L$-functions]{One level density of low-lying zeros of quadratic and quartic Hecke $L$-functions}

\date{\today}
\author{Peng Gao and Liangyi Zhao}

\begin{abstract}
In this paper, we prove some one level density results for the low-lying zeros of families of quadratic and quartic Hecke $L$-functions of the Gaussian field.  As corollaries, we deduce that, respectively, at least $94.27 \%$ and $5\%$ of the members of the quadratic family and the quartic family do not vanish at the central point.
\end{abstract}

\maketitle

\noindent {\bf Mathematics Subject Classification (2010)}: 11L40, 11M06, 11M26, 11M50, 11R16  \newline

\noindent {\bf Keywords}: one level density, low-lying zeros, quadratic Hecke character, quartic Hecke character, Hecke $L$-function

\section{Introduction}

Given a natural family of $L$-functions, the density conjecture of N. Katz and P. Sarnak \cites{KS1, K&S} states that
 the
distribution of zeros near the central point of a family of
$L$-functions is the same as that of eigenvalues near $1$ of a
corresponding classical compact group.  An important example is the family of quadratic Dirichlet characters. Let $\chi$ be a primitive Dirichlet character and we denote the non-trivial zeroes of the Dirichlet $L$-function
   $L(s, \chi)$ by $\half+i \gamma_{\chi, j}$.  Without assuming the generalized Riemann hypothesis (GRH), we order them as
\begin{equation}
\label{zeroorder}
    \ldots \leq
   \Re \gamma_{\chi, -2} \leq
   \Re \gamma_{\chi, -1} < 0 \leq \Re \gamma_{\chi, 1} \leq \Re \gamma_{\chi, 2} \leq
   \ldots.
\end{equation}
    For any primitive Dirichlet character $\chi$ with conductor $q$ of size $X$, we set
\begin{align}
\label{normalroot}
    \tilde{\gamma}_{\chi, j}= \frac{\gamma_{\chi, j}}{2 \pi} \log X
\end{align}
and define for an even Schwartz class function $\phi$,
\begin{equation} \label{Sdef}
S(\chi, \phi)=\sum_{j} \phi(\tilde{\gamma}_{\chi, j}).
\end{equation}

 For positive, odd
and square-free integers $d$, the Kronecker symbol $\chi_{8d}=(\frac{8d}{\cdot})$ is primitive.  Let
$D(X)$ denote the set of such $d$ satisfying $X\le d \le 2X$. In \cite{O&S},  A. E. \"{O}zluk and C. Snyder studied the family
of quadratic Dirichlet $L$-functions.  It follows from their work that assuming GRH for this family,
we have
\begin{align}
\label{1}
 \lim_{X \rightarrow +\infty} \frac{1}{ \# D(X)}\sum_{d \in D(X)} S(\chi_{8d}, \phi)
= \int\limits_{\rear} \phi(x) W_{USp}(x) \dif x, \; \; \mbox{where} \; \; W_{USp}(x)=1-\frac{\sin(2\pi x)}{2\pi x},
\end{align}
  provided that the support of $\hat{\phi}$, the Fourier
transform of $\phi$, is contained in the interval $(-2, 2)$.  The expression on the left-hand side of \eqref{1} is known as the one-level density of the low-lying zeros for this family of $L$-functions under consideration.\newline

The kernel of the integral $W_{USp}$ in \eqref{1} is the same function which occurs
on the random matrix theory side, when studying the eigenvalues of unitary symplectic matrices. This shows that the family of
quadratic Dirichlet $L$-functions is a symplectic family.  In \cite{Ru}, M. O. Rubinstein extended the work of A. E. \"{O}zluk and C. Snyder to all $n$-level densities (roughly speaking, investigating $n$-tuples of zeros) without assuming GRH.  He showed that the $n$-level analogue of the limit in \eqref{1} converge to the symplectic densities for test functions $\phi(x_1,\ldots, x_n)$ whose Fourier transforms $\hat{\phi}(u_1,\ldots, u_n)$ is supported in $\sum^n_{i=1}|u_i|<1$. \newline

In \cite{Gao}, assuming the truth of GRH, the first-named author computed the $n$-level
  densities for low-lying zeros of the family for
quadratic Dirichlet $L$-functions when $\hat{\phi}(u_1,\ldots, u_n)$ is supported in $\sum^n_{i=1}|u_i|<2$. It was shown that this result agrees with random matrix theory for $n \leq 7$ by J. Levinson and S. J. Miller \cite{LM} and for all $n$ by A. Entin, E. Roditty-Gershon and Z. Rudnick \cite{ER-G&R}. In \cite{MS1}, A. Mason and N. C. Snaith developed a new formula for the $n$-level densities of eigenvalues of unitary symplectic matrices and they show in \cite{MS} that their result leads to a relatively straightforward way to match the result in \cite{Gao} with random matrix theory. \newline

 Besides the family of quadratic Dirichlet $L$-functions, the density conjecture has been confirmed
for many other families of $L$-functions, such as different types of
Dirichlet $L$-functions \cites{Gao, O&S, Ru, Miller1, HuRu}, $L$-functions with
characters of the ideal class group of the imaginary quadratic
field $\ratq(\sqrt{-D})$ \cite{FI}, automorphic $L$-functions \cites{ILS, DuMi2, HuMi, RiRo, Royer}, elliptic curves $L$-functions \cites{SBLZ1, HB1, Brumer, SJM, Young}, symmetric powers of $GL(2)$ $L$-functions \cite{Gu2, DM} and a family of $GL(4)$ and $GL(6)$
$L$-functions \cite{DM}. \newline

   Among the many results concerning the $n$-level
  densities of low-lying zeroes for various families of $L$-functions,  A. M. G\"ulo\u{g}lu \cite{Gu} studied the one-level density of the low-lying zeros of
a family of Hecke $L$-functions of $\ratq (\omega)$ ($\omega
= \exp (2 \pi i/3)$) associated with cubic symbols
$\chi_c=(\frac {\cdot}{c})_3$ with $c$ square-free and congruent
to 1 modulo 9, regarded as primitive ray class characters of the
ray class group $h_{(c)}$.  We recall here that for any $c$, the
ray class group $h_{(c)}$ is defined to be $I_{(c)}/P_{(c)}$,
where $I_{(c)} = \{ \mathcal{A} \in I, (\mathcal{A}, (c)) = 1 \}$
and $P_{(c)} = \{(a) \in P, a \equiv 1 \pmod{c} \}$ with $I$ and
$P$ denoting the group of fractional ideals in $\ratq (\omega)$
and the subgroup of principal ideals, respectively.  Assuming GRH, G\"ulo\u{g}lu \cite{Gu} obtained the one-level density for this family when the Fourier transform of the test function is supported in $(-31/30, 31/30)$. In \cite{G&Zhao2}, an unconditional result was obtained for the same family with a more restricted range for the support of the Fourier transform of the test function. \newline

A general approach towards establishing the $n$-level densities involves converting the sum over
zeros of the $L$-functions under consideration into sums over
primes using the relevant versions of the explicit formula (see Section \ref{section: Explicit Formula}).  This
leads to estimation on certain character sums. It is this estimation that mainly affects the breadth of the support of the Fourier transform of the test function in the resulting expression. A common key ingredient used both in \cite{Gao} and \cite{Gu} is the Poisson summation to, essentially, convert the corresponding character sums to other character sums over dual lattices. In this process, the length of the character sum is shortened which allows one to get a better estimation resulting in the enlargement on the support of the Fourier transform of the test function in the expression for $n$-level densities. In \cite{Gao}, the Poisson summation over $\mz$ is used,  following a method of K. Soundararajan \cite{sound1}. In \cite{Gu}, a two dimensional Poisson summation is used, which is similar to the Poisson summation over $\mz(\omega)$ developed by D. R. Heath-Brown in \cite[Lemma 10]{H}.
\newline

   Motivated by the results in \cite{Gu} and \cite{Gao}, it is our goal in the paper to further explore the application of Poisson summation in the study of one level density results for the
low-lying zeros.  We focus our attention on the family of quadratic and quartic Hecke $L$-functions in the Gaussian field $K=\mq(i)$. \newline

 Let $\chi$ be a primitive Hecke character, the Hecke $L$-function associated with
$\chi$ is defined for $\Re(s) > 1$ by
\begin{equation*}
  L(s, \chi) = \sum_{0 \neq \mathcal{A} \subset
  \mathcal{O}_K}\chi(\mathcal{A})(N(\mathcal{A}))^{-s},
\end{equation*}
  where $\mathcal{A}$ runs over all non-zero integral ideals in $K$ and $N(\mathcal{A})$ is the
norm of $\mathcal{A}$. As shown by E. Hecke, $L(s, \chi)$ admits
analytic continuation to an entire function and satisfies a
functional equation.  We refer the reader to \cites{Gu, Luo, G&Zhao1, G&Zhao3} for a more detailed discussion of these Hecke characters and $L$-functions.  We denote non-trivial zeroes of $L(s,
\chi)$ by $\half+i \gamma_{\chi, j}$ and order them in a
fashion similar to \eqref{zeroorder}.  \newline

Set
\[ C(X) = \{ c \in \intz[i] : (c,1+i) =1, \; c \; \mbox{square-free}, \; X \leq N(c) \leq 2X \} . \]
We shall define in Section \ref{sect: Kronecker} the primitive quadratic Kronecker symbol $\chi_{i(1+i)^5c}$ and the primitive quartic Kronecker symbol $\chi_{(1+i)^7c}$.  For $\chi=\chi_{i(1+i)^5c}$ or $\chi_{(1+i)^7c}$, we set $\tilde{\gamma}_{\chi, j}$ as in \eqref{normalroot} and $S(\chi, \phi)$ as in \eqref{Sdef} for an even Schwartz class function $\phi$.  Further, let $\Phi_X(t)$ be a non-negative smooth function supported on $(1,2)$,
    satisfying $\Phi_X(t)=1$ for $t \in (1+1/U, 2-1/U)$ with $U=\log \log X$ and such that
    $\Phi^{(j)}_X(t) \ll_j U^j$ for all integers $j \geq 0$. Our results are as follows.
\begin{theorem}
\label{quadraticmainthm}
Suppose that GRH is true.  Let $\phi(x)$ be an even Schwartz function whose
Fourier transform $\hat{\phi}(u)$ has compact support in $(-2,2)$, then
\begin{align}
\label{quaddensity}
 \lim_{X \rightarrow +\infty}\frac{1}{\# C(X)}\sumstar_{(c, 1+i)=1}  S(\chi_{i(1+i)^5c}, \phi)\Phi_X \left( \frac {N(c)}{X} \right)
 = \int\limits_{\mathbb{R}} \phi(x) \left(1-\frac{\sin(2\pi x)}{2\pi x} \right)\dif x.
\end{align}
   Here the ``$*$'' on the sum over $c$ means that the sum is restricted to square-free elements $c$ of $\mathbb{Z}[i]$ .
\end{theorem}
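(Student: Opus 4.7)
The plan is to apply the Hecke-style explicit formula to $S(\chi_{i(1+i)^5c},\phi)$ so as to convert the sum over zeros into a sum over prime ideals of $\mathcal{O}_K=\intz[i]$. Under GRH this yields, up to the main term $\hat\phi(0)$ and a negligible error, an expression of the shape
\begin{align*}
S(\chi,\phi)\;\approx\;\hat\phi(0)\;-\;\frac{2}{\log X}\sum_{\mathfrak n}\frac{\Lambda_K(\mathfrak n)\,\chi(\mathfrak n)}{\sqrt{N(\mathfrak n)}}\,\hat\phi\!\left(\frac{\log N(\mathfrak n)}{\log X}\right),
\end{align*}
where $\Lambda_K$ is the von Mangoldt function on integral ideals of $K=\mq(i)$. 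Averaging over $c\in C(X)$ against the smooth weight $\Phi_X(N(c)/X)$ then reduces the theorem to controlling, for each $\mathfrak n$, the character sum $\sumstar_{c}\chi_{i(1+i)^5c}(\mathfrak n)\,\Phi_X(N(c)/X)$.

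I would next split according to whether $\mathfrak n$ is a square of an integral ideal or not. The diagonal (square) contribution is handled by detecting square-freeness of $c$ via $\mu^2$ and appealing to the prime ideal theorem for $K$: it yields precisely the $-\int_{\mr}\phi(x)\sin(2\pi x)/(2\pi x)\,\dif x$ part of $\int\phi\,W_{USp}$, which combined with the $\hat\phi(0)=\int\phi$ term reproduces the right-hand side of \eqref{quaddensity}.

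The bulk of the work is the off-diagonal estimate. For non-square $\mathfrak n$, I would insert $\mu^2$ to remove the square-free condition on $c$, apply quadratic reciprocity in $\intz[i]$ to convert $\chi_{i(1+i)^5c}(\mathfrak n)$ into a Kronecker symbol in the variable $c$ to modulus essentially $\mathfrak n$, and then carry out a two-dimensional Poisson summation over $\intz[i]$ in the spirit of Heath-Brown's Lemma~10 in \cite{H}. The net effect is to replace the sum of length $\sim X$ over $c$ by a dual sum of length $\sim N(\mathfrak n)/X$ weighted by a quadratic Gauss-sum factor and by the Fourier transform of $\Phi_X$. Since $\hat\phi$ is supported in $(-2,2)$, only $\mathfrak n$ with $N(\mathfrak n)\le X^{2-\varepsilon}$ contribute, so the dual sum has length at most $X^{1-\varepsilon}$; combined with the square-root cancellation available for the quadratic Gauss sum over $\intz[i]$, this renders the off-diagonal contribution $o(1)$ as $X\to\infty$.

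The hardest part will be executing the Poisson summation cleanly: one must strip the ramified factor $i(1+i)^5$ out of the Kronecker symbol, track the congruence conditions introduced by reciprocity, and verify that the zero dual-frequency term (which a priori produces a long main term) is either absorbed into the square-ideal contribution or cancels because of the structure of the Gauss sum. A secondary technical issue, familiar from \cite{Gao}, is the removal of $\mu^2$ with an auxiliary parameter $Y=Y(X)$: writing $c=l^2d$, one treats $N(l)\le Y$ directly and uses GRH to bound the tail $N(l)>Y$, and it is the optimal choice of $Y$ against the Poisson-dual-length tradeoff that will ultimately allow one to push the admissible support of $\hat\phi$ all the way to $(-2,2)$.
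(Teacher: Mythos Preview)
Your overall architecture (explicit formula, $\mu^2=M_Z+R_Z$ with a cutoff, reciprocity plus two-dimensional Poisson in $c$, GRH for the tail) matches the paper's, but there is a genuine conceptual gap in how you account for the main term when $\hat\phi$ is supported beyond $(-1,1)$.

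The prime-square (``diagonal'') piece of the explicit formula does \emph{not} produce $-\int\phi(x)\tfrac{\sin 2\pi x}{2\pi x}\,\dif x=-\tfrac12\int_{-1}^{1}\hat\phi$. It produces $-\tfrac12\int_{\mr}\hat\phi$; these coincide only for $\mathrm{supp}\,\hat\phi\subset(-1,1)$. Consequently the prime (``off-diagonal'') sum cannot be $o(1)$ on the full range $(-2,2)$: it must supply the compensating term $\tfrac12\int(1-\chi_{[-1,1]})\hat\phi$. In the paper this compensation emerges \emph{after} Poisson. Writing $g_2(k,\varpi)=\leg{ik}{\varpi}N(\varpi)^{1/2}$, the dual sum over $k$ carries the quadratic character $\leg{k}{\varpi}$; for $k$ a nonzero perfect square this character is principal, so the subsequent $\varpi$-sum has no cancellation and yields a secondary main term of size $\asymp X\log X$ (your guess that only the zero dual frequency could be dangerous is wrong: in fact the $k=0$ term vanishes because $g_2(0,\varpi)=0$, while the $k=\square\neq 0$ terms do not). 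It is only the $k\neq\square$ dual frequencies that are disposed of as an error, and that step uses GRH for the non-principal Hecke character $\leg{k}{\cdot}$ rather than ``square-root cancellation of the Gauss sum''.

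So: keep your framework, but replace the sentence ``renders the off-diagonal contribution $o(1)$'' by the correct trichotomy $k=0$ / $k=\square\neq 0$ / $k\neq\square$ on the Poisson side, extract the secondary main term from the middle case (this is the analogue of Soundararajan's computation over $\mz$), and only then show the remainder is $o(1)$.
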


S. Chowla \cite{chow} conjectured that $L(1/2, \chi) \neq 0$ for any primitive Dirichlet character $\chi$.  One expects that the same statement should hold for Hecke characters as well.  Using Theorem~\ref{quadraticmainthm}, we can deduce the following non-vanishing result.

  \begin{corollary} \label{quadnonvan}
 Suppose that the GRH is true and that $1/2$ is a zero of $L \left( s, \chi_{i(1+i)^5c} \right)$ of order $m_c \geq 0$.  As $X \to \infty$,
\[  \sumstar_{(c, 1+i)=1} m_c \Phi_X \left( \frac {N(c)}{X} \right) \leq  \left( \frac{\cot \frac{1}{4} -3 }{8} + o(1) \right) \# C(X). \]
   Moreover, as $X \to \infty$
\[ \# \{ c \in C(X) : L\left( 1/2, \chi_{i(1+i)^5c} \right) \neq 0 \} \geq \left( \frac{19-\cot \frac{1}{4}}{16} + o(1) \right) \# C(X) . \]
    \end{corollary}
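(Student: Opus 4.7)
The plan is to deduce both assertions of the corollary from Theorem~\ref{quadraticmainthm} via a standard extremal argument over nonnegative test functions, enhanced in the non-vanishing half by a functional-equation parity input. Fix any nonnegative even Schwartz function $\phi$ whose Fourier transform $\hat\phi$ is supported in $(-2,2)$. If $L(s, \chi_{i(1+i)^5c})$ has a zero of order $m_c$ at $s=\tfrac12$, then the value $\phi(0)$ appears $m_c$ times in the sum $S(\chi_{i(1+i)^5c}, \phi)$ of~\eqref{Sdef}; since the remaining terms are nonnegative, $m_c\,\phi(0) \leq S(\chi_{i(1+i)^5c}, \phi)$. Multiplying by $\Phi_X(N(c)/X)\geq 0$, summing over square-free $c$ coprime to $1+i$, and inserting Theorem~\ref{quadraticmainthm} gives
\[
\sumstar_{(c,1+i)=1} m_c\, \Phi_X\!\left(\tfrac{N(c)}{X}\right)
\;\leq\; \frac{1}{\phi(0)} \int_{\mathbb{R}} \phi(x)\, W_{USp}(x)\,\dif x \cdot \#C(X) \cdot (1 + o(1)).
\]

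The first bound of the corollary then reduces to the extremal statement
\[
\inf_{\phi} \frac{1}{\phi(0)}\int_{\mathbb{R}} \phi(x)\, W_{USp}(x)\,\dif x \;=\; \frac{\cot\tfrac14 - 3}{8},
\]
the infimum taken over the class above. Using Plancherel and the distributional identity $\widehat{W_{USp}} = \delta_0 - \tfrac12 \mathbf{1}_{[-1,1]}$, the functional equals $\hat\phi(0) - \tfrac12\int_{-1}^{1}\hat\phi(u)\,\dif u$, while the normalization is $\phi(0) = \int_{-2}^{2}\hat\phi(u)\,\dif u$. Parametrizing $\phi = |h|^2$ with $\hat h$ supported in $(-1,1)$ turns this into a quadratic minimization on the Paley--Wiener space; its Euler--Lagrange equation is a Fredholm integral equation with a sinc kernel, whose closed-form solution produces the trigonometric constant above. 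This is the Gaussian-integer analog of the extremal computation of \cite{O&S}.

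For the non-vanishing statement we exploit a parity property of $m_c$. As will be established in the explicit construction of $\chi_{i(1+i)^5 c}$ in Section~\ref{sect: Kronecker}, the local factor $i(1+i)^5$ is chosen precisely so that the self-dual functional equation of the completed $L$-function has root number $+1$ for every square-free $c$ coprime to $1+i$ (the Gaussian analog of the classical fact that $\chi_{8d}$ has root number $+1$ for $d$ positive, odd and square-free). Consequently $m_c$ is always even, and $2\cdot \mathbf{1}_{L(\tfrac12,\chi_{i(1+i)^5c}) = 0} \leq m_c$. Combining this with the first part, the weighted count of vanishing $c$ is at most $((\cot\tfrac14 - 3)/16)\, \#C(X) + o(\#C(X))$. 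Since $\Phi_X$ equals $1$ outside a transition set of relative size $O(1/U) = O(1/\log\log X)$, dropping the weight introduces only $o(\#C(X))$ extra terms; subtracting from $\#C(X)$ then gives the asserted lower bound $(19-\cot\tfrac14)/16 + o(1)$ on the nonvanishing proportion.

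The main obstacle is the explicit resolution of the extremal problem in the second paragraph, which requires correctly identifying the optimal Fourier-side test function on $[-2,2]$ and evaluating the resulting trigonometric integral to extract the constant $(\cot\tfrac14 - 3)/8$. A secondary but essential technical point is the uniform root-number calculation underpinning the non-vanishing half, which rests on a Hecke Gauss sum identity in $\intz[i]$ specific to the conductor $i(1+i)^5 c$; without this parity input one would obtain only the weaker bound $1 - (\cot\tfrac14 - 3)/8$.
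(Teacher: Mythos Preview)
Your proposal is correct and follows exactly the standard argument the paper defers to via \cite[Corollary~2.1]{B\&F}: bound $m_c\,\phi(0)\le S(\chi_{i(1+i)^5c},\phi)$ for a nonnegative test function with $\hat\phi$ supported in $(-2,2)$, choose $\phi$ to minimize $\phi(0)^{-1}\int\phi\,W_{USp}$, and then use evenness of $m_c$ (root number $+1$) to halve the vanishing count. One small correction: Section~\ref{sect: Kronecker} establishes only the primitivity of $\chi_{i(1+i)^5c}$, not the sign of the functional equation; the root-number computation is a separate Gauss-sum evaluation that you should cite from the literature rather than attribute to that section. Also, since the corollary asserts only an inequality, you need not solve the full extremal problem---exhibiting the specific optimal test function (as done in the cited reference) already suffices.
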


\begin{proof}
The proof goes along the same line as that of \cite[Corollary 2.1]{B&F}.  We thus omit the details here.
\end{proof}

For a family of quartic characters, we have the following

\begin{theorem}
\label{quarticmainthm}
Suppose that GRH is true.  Let $\phi(x)$ be an even Schwartz function whose
Fourier transform $\hat{\phi}(u)$ has compact support in $(-20/19, 20/19)$, then
\begin{align} \label{quarticmainthmeq}
 \lim_{X \rightarrow +\infty}\frac{1}{\# C(X)}\sumstar_{(c, 1+i)=1}  S(\chi_{(1+i)^7c}, \phi)\Phi_X \left( \frac {N(c)}{X} \right)
 = \int\limits_{\mathbb{R}} \phi(x) \dif x.
\end{align}
   Here the ``$*$'' on the sum over $c$ means that the sum is restricted to square-free elements $c$ of $\mathbb{Z}[i]$ .
\end{theorem}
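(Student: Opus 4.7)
The plan is to adapt the explicit-formula-plus-Poisson-summation strategy to the quartic setting, in parallel with the proof of Theorem~\ref{quadraticmainthm}. First, I would apply the explicit formula for $L(s, \chi_{(1+i)^7 c})$ under GRH to write
\[
S(\chi_{(1+i)^7 c}, \phi) = \hat\phi(0) - \frac{2}{\log X} \sum_{k \geq 1} \sum_{\mathfrak{p}} \frac{\log N(\mathfrak{p})}{N(\mathfrak{p})^{k/2}} \Re\!\left( \chi_{(1+i)^7 c}(\mathfrak{p})^k \right) \hat\phi\!\left( \frac{k \log N(\mathfrak{p})}{\log X}\right) + o(1),
\]
where $\mathfrak{p}$ runs over prime ideals of $\mathbb{Z}[i]$. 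After averaging over $c$ against the weight $\Phi_X(N(c)/X)$ and normalizing by $\#C(X)$, the $\hat\phi(0)$ term immediately produces $\int_{\mathbb{R}} \phi(x)\,\dif x$, which is the desired right-hand side of \eqref{quarticmainthmeq}. The task is thus to show that every $k \geq 1$ contribution tends to zero as $X \to \infty$, given $\mathrm{supp}\,\hat\phi \subset (-20/19, 20/19)$.

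The main obstacle is the $k = 1$ contribution. Here $\chi_{(1+i)^7 c}(\mathfrak{p})$, viewed as a function of $c$, becomes a genuinely quartic character via quartic reciprocity in $\mathbb{Z}[i]$. Following the template of \cite{Gu} and of the analogous step in the proof of Theorem~\ref{quadraticmainthm}, I would first remove the square-free condition via M\"obius inversion, writing $c = a^2 c'$ with $c'$ unrestricted up to coprimality, and then apply a two-dimensional Poisson summation over $\mathbb{Z}[i]$ at modulus $(1+i)^7 \mathfrak{p} a^2$ in the spirit of \cite[Lemma 10]{H} to the resulting smooth sum in $c'$. The dual sum produces a quartic Gauss sum, which can be evaluated multiplicatively and estimated by the Weil-type bound $|\tau(\chi)| \ll N(\mathfrak{p})^{1/2}$ for primitive characters. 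Tracking the admissible ranges---$N(\mathfrak{p}) \leq X^\sigma$ for the support parameter $\sigma$, $N(a) \leq X^{1/2}$, and the dual variables controlled by the decay of $\widehat{\Phi_X}$---a careful balancing yields a bound of size $o(\log X)$ precisely when $\sigma < 20/19$, which is the source of the support restriction in the theorem.

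The $k = 2$ contribution is governed by $\chi_{(1+i)^7 c}^2$, a \emph{nontrivial} quadratic character---in contrast to the quadratic family, where the corresponding square is principal. I would treat it by the same M\"obius-plus-Poisson procedure; since $N(\mathfrak{p}) \leq X^{\sigma/2}$ in this range, the estimates are strictly stronger than at $k=1$ and deliver vanishing throughout $(-20/19, 20/19)$. The nontriviality of $\chi^2$ means no diagonal contribution survives, so no secondary term of shape $-\sin(2\pi x)/(2\pi x)$ arises, and the kernel of the limiting density is simply $1$, consistent with the unitary symmetry expected for a family of complex characters.

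For $k \geq 3$ the trivial bound $|\chi_{(1+i)^7 c}(\mathfrak{p})^k| \leq 1$ together with Mertens' estimate on $\mathbb{Z}[i]$ controls the contribution by $O(1/\log X)$. Combining the main term, the vanishing from the $k=1,2$ Poisson analyses, and the trivial estimate for $k \geq 3$ establishes \eqref{quarticmainthmeq}. The hard part is the $k=1$ analysis: one must keep uniform control on quartic Gauss sums both when the dual variable is generic and when it is divisible by low powers of $1+i$ or by $\mathfrak{p}$, and it is the treatment of these ``special'' dual variables that ultimately pins down the threshold $20/19$.
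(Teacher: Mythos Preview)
Your outline follows the paper through the explicit formula, the reduction to the $j=1$ prime sum, and the M\"obius-plus-Poisson step. The genuine gap is in what happens \emph{after} Poisson summation. You write that the dual sum produces a quartic Gauss sum, to be ``estimated by the Weil-type bound $|\tau(\chi)| \ll N(\mathfrak{p})^{1/2}$,'' and that balancing ranges then yields the threshold $20/19$. This does not work: bounding the Gauss sum by its modulus $\sqrt{N(\varpi)}$ recovers at best support $(-1,1)$, not $(-20/19,20/19)$. After Poisson the inner sum over primes has the shape
\[
\sum_{\varpi}\frac{\log N(\varpi)}{N(\varpi)^{3/2}}\,\overline{\Bigl(\frac{kl^2}{\varpi}\Bigr)_4}\,g_4(\varpi)\,\hat\phi\Bigl(\frac{\log N(\varpi)}{\log X}\Bigr)\widetilde\Phi\Bigl(\sqrt{\tfrac{N(k)X}{2N(l^2\varpi)}}\Bigr),
\]
and unlike the quadratic case---where $g_2(\varpi)=\leg{i}{\varpi}N(\varpi)^{1/2}$ collapses to a character, so GRH on a Hecke $L$-function gives square-root cancellation and hence support $2$---here $g_4(\varpi)$ is a genuinely oscillating quantity of modulus $N(\varpi)^{1/2}$ whose argument depends on $\varpi$. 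No Hecke $L$-function controls $\sum_{\varpi}\chi(\varpi)g_4(\varpi)$.

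What the paper actually uses is Patterson's metaplectic input: the Dirichlet series $h(r,s;\chi)=\sum_{n}\chi(n)g_4(r,n)N(n)^{-s}$ continues meromorphically with a possible pole at $s=5/4$ and polynomial growth (Lemma~\ref{lem1}), which via Perron and the Heath-Brown--Patterson sieve yields
\[
\sum_{N(\varpi)\le x}\overline{\Bigl(\frac{kl^2}{\varpi}\Bigr)_4}\frac{g_4(\varpi)\Lambda(\varpi)}{\sqrt{N(\varpi)}}\ll x^{\varepsilon}\bigl(N(kl^2)^{1/10}x^{9/10}+N(kl^2)^{1/8+\varepsilon}x^{7/8}+x^{19/20}\bigr).
\]
It is precisely the exponent $19/20$ here---coming from Patterson's theory, not from a Weil bound nor from ``special'' dual variables---that produces the support $20/19$. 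Your proposal is missing this ingredient entirely, and without it the argument stalls at support $1$. (Separately, the paper disposes of the $j=2$ term by a direct GRH bound on $\sum_\varpi \chi^2(\varpi)\log N(\varpi)$ before ever applying Poisson; your Poisson treatment of $k=2$ is unnecessary, though not wrong.)
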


Similar to Corollary~\ref{quadnonvan}, we have a non-vanishing result for the family of quartic Hecke $L$-functions under our consideration.

 \begin{corollary}
 Suppose that the GRH is true and that $1/2$ is a zero of $L \left( s, \chi_{(1+i)^7c} \right)$ of order $n_c \geq 0$.  As $X \to \infty$,
 \begin{equation} \label{coreq}
   \sumstar_{(c, 1+i)=1} n_c \Phi_X \left( \frac {N(c)}{X} \right) \leq  \left( \frac{19}{20} + o(1) \right) \# C(X).
   \end{equation}
   Moreover, as $X \to \infty$
\begin{equation} \label{coreq2}
  \# \{ c \in C(X) : L\left( 1/2, \chi_{(1+i)^7c} \right) \neq 0 \} \geq \left( \frac{1}{20} + o(1) \right) \# C(X) .
  \end{equation}
    \end{corollary}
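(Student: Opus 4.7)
The plan is to follow the standard reduction from a one-level density statement to a bound on the average order of vanishing at the central point, exactly as in \cite[Corollary 2.1]{B&F} and the preceding Corollary~\ref{quadnonvan}.  Abbreviate $\chi_c = \chi_{(1+i)^7 c}$ and fix a non-negative even Schwartz function $\phi$ with $\phi(0)>0$ whose Fourier transform $\hat\phi$ has compact support in $(-20/19,20/19)$.  Because each of the $n_c$ zeros of $L(s,\chi_c)$ at the central point contributes $\phi(0)$ to $S(\chi_c,\phi)$ defined in \eqref{Sdef}, and all other contributions are non-negative (using GRH to ensure every $\tilde\gamma_{\chi_c,j}$ is real), one has $S(\chi_c,\phi) \ge n_c\phi(0)$.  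Weighting by $\Phi_X(N(c)/X)$, summing over $c$, and invoking Theorem~\ref{quarticmainthm} yields
\begin{equation*}
\phi(0) \sumstar_{(c,1+i)=1} n_c \Phi_X\!\left(\frac{N(c)}{X}\right) \le (1+o(1))\,\# C(X) \int_{\mathbb{R}} \phi(x) \dif x.
\end{equation*}

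To convert this into \eqref{coreq}, one minimizes $\int\phi/\phi(0)$ over admissible $\phi$.  This is a standard extremal problem for band-limited non-negative functions: writing $\phi = |g|^2$ with $\hat g$ supported in $(-10/19,10/19)$, Plancherel gives $\int\phi\,\dif x = \|\hat g\|_2^2$, while Cauchy--Schwarz gives $\phi(0) = |g(0)|^2 = \bigl|\int_{-\infty}^{\infty} \hat g(u)\,\dif u\bigr|^2 \le (20/19)\|\hat g\|_2^2$, whence $\int\phi/\phi(0) \ge 19/20$.  Near-equality is realised by $\phi(x) = (\sin(\pi a x)/(\pi x))^2$ with $a$ slightly less than $20/19$; a mild smoothing of $\hat g$ at the boundary restores the Schwartz property without affecting the ratio up to $o(1)$.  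Letting $a\to 20/19$ gives \eqref{coreq}.

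For \eqref{coreq2}, the defining properties of $\Phi_X$ yield $\sumstar_{(c,1+i)=1}\Phi_X(N(c)/X) = (1+o(1))\,\# C(X)$, the $o(1)$ coming from the two boundary strips of relative width $1/U$.  Splitting this weighted total according to whether $L(1/2,\chi_c)=0$ and using the trivial bound $n_c \ge \mathbf{1}_{L(1/2,\chi_c)=0}$ in \eqref{coreq} shows that the vanishing part contributes at most $(19/20+o(1))\,\#C(X)$, leaving at least $(1/20+o(1))\,\#C(X)$ for the non-vanishing part.  Since $\Phi_X\le 1$, this lower-bounds $\#\{c\in C(X):L(1/2,\chi_c)\ne 0\}$ by the same quantity.

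The entire argument is a short bookkeeping deduction from Theorem~\ref{quarticmainthm} and presents no serious obstacle.  The only substantive ingredient is the extremal computation, whose optimal value $19/20$ matches precisely the support interval $(-20/19,20/19)$ appearing in Theorem~\ref{quarticmainthm}, producing exactly the constants claimed in \eqref{coreq} and \eqref{coreq2}.
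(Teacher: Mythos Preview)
Your proposal is correct and follows essentially the same approach as the paper: both use the Fej\'er-type test function $\phi(x)=\phi_0(\theta x)$ with $\phi_0(x)=(\sin\pi x/\pi x)^2$ and let $\theta\uparrow 20/19$, deducing \eqref{coreq} from $n_c\phi(0)\le S(\chi_c,\phi)$ and Theorem~\ref{quarticmainthm}, then obtaining \eqref{coreq2} by splitting $\#C(X)$ according to vanishing at $s=1/2$. Your framing via the extremal problem for $\int\phi/\phi(0)$ and your remark about smoothing $\hat g$ to recover the Schwartz hypothesis are slightly more careful than the paper (which applies Theorem~\ref{quarticmainthm} directly to the non-Schwartz $\phi_0(\theta x)$), but the substance is identical.
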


\begin{proof}
Consider
\[ \phi_0(x) = \left( \frac{\sin \pi x}{\pi x} \right)^2 . \]
It is well-known that
\[ \int\limits_{\rear} \phi_0(x) \dif x = 1 \quad \mbox{and} \quad \hat{\phi}_0 (x) = \max \{ |x| , 0 \} . \]
Take $0 \leq \theta < 20/19$, then $\phi(x) = \phi_0(\theta x)$ satisfies the requirements of Theorem~\ref{quarticmainthm}.  The truth of GRH implies that all the non-trivial zeros of $L(s,\chi_{(1+i)^7c})$ are of the form $1/2+i \gamma$ with $\gamma \in \rear$.  So
\[ n_c \leq S(\chi_{(1+i)^7c}, \phi) \]
for every $c$.   Now using \eqref{quarticmainthmeq}, we get that as $X \to \infty$
 \[  \frac{1}{\# C(X)} \sumstar_{(c, 1+i)=1} n_c \Phi_X \left( \frac {N(c)}{X} \right) \leq \frac{1}{\# C(X)}\sumstar_{(c, 1+i)=1}  S(\chi_{(1+i)^7c}, \phi)\Phi_X \left( \frac {N(c)}{X} \right) = \int\limits_{\rear} \phi(x) \dif x + o(1) . \]
By taking $\theta$ arbitrarily close to $20/19$, the last integral above is $19/20 + o(1)$ and \eqref{coreq} follows from this. \newline

To prove \eqref{coreq2}, we start with
\[ \# C(X) =   \# \{ c \in C(X) : L\left( 1/2, \chi_{(1+i)^7c} \right) \neq 0 \} +   \# \{ c \in C(X) : L\left( 1/2, \chi_{(1+i)^7c} \right) = 0 \} . \]
As $X \to \infty$,
\[ \# \{ c \in C(X) : L\left( 1/2, \chi_{(1+i)^7c} \right) = 0 \} \leq \sumstar_{(c, 1+i)=1} \left( n_c \Phi_X \left( \frac {N(c)}{X} \right) +o(1) \right) \leq  \left( \frac{19}{20} + o(1) \right) \# C(X), \]
using \eqref{coreq}.  Now \eqref{coreq2} follows easily from the above.
\end{proof}

   Note that it follows from \eqref{1} that Theorem \ref{quadraticmainthm} shows that the family of quadratic Hecke $L$-functions is a symplectic family. We also note that, as
\[ \int\limits_{\rear} f(x) \dif x=\int\limits_{\rear} f(x)W_U(x) \dif x, \; \mbox{with} \; W_U(x)=1, \]
Theorem \ref{quarticmainthm} gives that the family of quartic Hecke $L$-functions is a unitary family. This is already observed in \cite{G&Zhao2}, where an unconditional result is obtained for the family of quartic Hecke $L$-functions with smaller support of the Fourier transform of the test function.

\subsection{Notations} The following notations and conventions are used throughout the paper.\\
\noindent $\Phi(t)$ for $\Phi_X(t)$. \newline
\noindent $e(z) = \exp (2 \pi i z) = e^{2 \pi i z}$. \newline
$f =O(g)$ or $f \ll g$ means $|f| \leq cg$ for some unspecified
positive constant $c$. \newline
$f =o(g)$ means $\displaystyle \lim_{x \rightarrow \infty}f(x)/g(x)=0$. \newline
$\mu_{[i]}$ denotes the M\"obius function on $\mz[i]$. \newline
The letter $\varpi$ reserved for primes in $\mz[i]$. \newline
$\zeta_{\mq(i)}(s)$ denotes the Dedekind zeta function of $\mq(i)$. \newline
$\chi_{[-1,1]}$ denotes the characteristic function of $[-1,1]$.

\section{Preliminaries}
\label{sec 2}

\subsection{Quadratic, quartic characters and Kronecker symbols}
\label{sect: Kronecker}
     The symbol $\leg {\cdot}{n}_4$ is the quartic
residue symbol in the ring $\mz[i]$.  For a prime $\varpi \in \mz[i]$
with $N(\varpi) \neq 2$, the quartic character is defined for $a \in
\mz[i]$, $(a, \varpi)=1$ by $\leg{a}{\varpi}_4 \equiv
a^{(N(\varpi)-1)/4} \pmod{\varpi}$, with $\leg{a}{\varpi}_4 \in \{
\pm 1, \pm i \}$. When $\varpi | a$, we define
$\leg{a}{\varpi}_4 =0$.  Then the quartic character can be extended
to any composite $n$ with $(N(n), 2)=1$ multiplicatively. We extend the definition of $\leg{\cdot }{n}_4$ to $n=1$ by setting $\leg{\cdot}{1}_4=1$. We further define $(\frac{\cdot}{n})=\leg {\cdot}{n}^2_4$  to be the quadratic
residue symbol for these $n$.  \newline

 Note that in $\intz[i]$, every ideal co-prime to $2$ has a unique
generator congruent to 1 modulo $(1+i)^3$.  Such a generator is
called primary. Recall that \cite[Theorem 6.9]{Lemmermeyer} the quartic reciprocity law states
that for two primary integers  $m, n \in \mz[i]$,
\begin{equation*}
 \leg{m}{n}_4 = \leg{n}{m}_4(-1)^{((N(n)-1)/4)((N(m)-1)/4)}.
\end{equation*}

    As a consequence, the following quadratic reciprocity law holds for two primary integers  $m, n \in \mz[i]$:
\begin{align}
\label{quadrecip}
 \leg{m}{n} = \leg{n}{m}.
\end{align}

   Observe that a non-unit
$n=a+bi$ in $\mz[i]$ is congruent to $1
\bmod{(1+i)^3}$ if and only if $a \equiv 1 \pmod{4}, b \equiv
0 \pmod{4}$ or $a \equiv 3 \pmod{4}, b \equiv 2 \pmod{4}$ by Lemma
6 on page 121 of \cite{I&R}. \newline

    From the supplement theorem to the quartic reciprocity law (see for example, Lemma 8.2.1 and Theorem 8.2.4 in \cite{BEW}),
we have for $n=a+bi$ being primary,
\begin{align}
\label{2.05}
  \leg {i}{n}_4=i^{(1-a)/2} \qquad \mbox{and} \qquad  \hspace{0.1in} \leg {1+i}{n}_4=i^{(a-b-1-b^2)/4}.
\end{align}

  We now define a character of order $2$ modulo $(1+i)^5$. In fact, for any element $c \in \mz[i], (c, 1+i)=1$, we can define a Dirichlet character $\chi_{i(1+i)^5c} \pmod {(1+i)^5c}$ by noting that the ring
$(\mz[i]/(1+i)^5c\mz[i])^*$ is isomorphic to the direct product of the group of units $U=\langle i \rangle$ and the group $N_{(1+i)^5c}$ formed by elements in $(\mz[i]/(1+i)^5c\mz[i])^*$ congruent to  $1 \pmod {(1+i)^3}$ (i.e. primary). Under this isomorphism, any element $n \in (\mz[i]/(1+i)^5c\mz[i])^*$ can be written uniquely as $n=u_n \cdot n_0$ with $u_n \in U$, $n_0 \in N_{(1+i)^5c}$.  We can now define $\chi_{i(1+i)^5c} \pmod {(1+i)^5c}$ such that for any $n \in (\mz[i]/(1+i)^5c\mz[i])^*$,
\begin{align*}
   \chi_{i(1+i)^5c}(n)=\leg {i(1+i)^5c}{n_0}.
\end{align*}

    One deduces from \eqref{2.05} and the quadratic reciprocity that  $\chi_{i(1+i)^5c}(n)=1$ when $n_0 \equiv 1 \pmod {(1+i)^5c}$. It follows from this that  $\chi_{i(1+i)^5c}(n)$ is well-defined (i.e. $\chi_{i(1+i)^5c}(n)=\chi_{i(1+i)^5c}(n')$ when $n \equiv n' \pmod {(1+i)^5c}$). As $\chi_{i(1+i)^5c}(n)$ is clearly multiplicative, of order $2$ and trivial on units, it can be regarded as a primitive Hecke character $\pmod {(1+i)^5c}$ of trivial infinite type.  Let $\chi_{i(1+i)^5c}$ stand for this Hecke character as well and we call it the Kronecker symbol. Furthermore, when $c$ is square-free, $\chi_{i(1+i)^5c}$ is non-principal and primitive. To see this, we write $c=u_c \cdot \varpi_1 \cdots \varpi_k$ with $u_c \in U$ and $\varpi_j$  being primary primes. Suppose $\chi_{i(1+i)^5c}$ is induced by some $\chi$ modulo $c'$ with $\varpi_j \nmid c'$, then by the Chinese Remainder Theorem, there exists an $n$ such that $n \equiv 1 \pmod {(1+i)^5c/\varpi_j}$ and $\leg {n}{\varpi_j} \neq 1$. It follows that $\chi(n)=1$ but $\chi_{i(1+i)^5c}(n) \neq 1$, a contradiction. Thus, $\chi_{i(1+i)^5c}$ can only be possibly induced by some $\chi$ modulo $(1+i)^4c$. By the Chinese Remainder Theorem, there exists an $n$ such that $n \equiv 1 \pmod {c}$ and $n \equiv 5 \pmod {(1+i)^5}$. As this $n \equiv 1 \pmod {(1+i)^4}$, it follows that $n \equiv 1 \pmod {(1+i)^4c}$, hence $\chi(n)=1$ but $\chi_{i(1+i)^5c}(n)=-1 \neq 1$ (note that $\leg {i}{n}=1$ when $n \equiv 5 \pmod {(1+i)^5}$ so we may assume that $c$ is primary) and this implies that $\chi_{i(1+i)^5c}$ is primitive. This also shows that $\chi_{i(1+i)^5c}$ is non-principal. \newline

    Similarly, for any element $c \in \mz[i], (c, 1+i)=1$, we define the Kronecker symbol $\chi_{(1+i)^7c}$ as a character of order $4$ modulo $(1+i)^7c$ such that for any $n \in (\mz[i]/(1+i)^7c\mz[i])^*$, with $n=u_n \cdot n_0$, $u_n \in U$ and $n_0$ being primary,
\begin{align*}
   \chi_{(1+i)^7c}(n)=\leg {(1+i)^7c}{n_0}_4.
\end{align*}

\subsection{The Gauss sums}
\label{section:Gauss}

For any $n \in \mz[i]$, $n \equiv 1 \pmod {(1+i)^3}$, the quadratic and quartic
 Gauss sums $g_2(n)$, $g_4(n)$ are defined by
\[    g_2(n) =\sum_{x \bmod{n}} \leg{x}{n} \widetilde{e}\leg{x}{n} \qquad \mbox{and} \qquad
   g_4(n)  =\sum_{x \bmod{n}} \leg{x}{n}_4 \widetilde{e}\leg{x}{n}, \]
  where $ \widetilde{e}(z) =\exp \left( 2\pi i  (\frac {z}{2i} - \frac {\overline{z}}{2i}) \right)$.  Note that $g_2(1)=g_4(1)=1$ by definition. \newline

  More generally, for any $n \in \mz[i]$, $n \equiv 1 \pmod {(1+i)^3}$, we set
\begin{align*}
 g_2(r,n) = \sum_{x \bmod{n}} \leg{x}{n} \widetilde{e}\leg{rx}{n} \qquad \mbox{and} \qquad g_4(r,n) = \sum_{x \bmod{n}} \leg{x}{n}_4 \widetilde{e}\leg{rx}{n}.
\end{align*}

     The following properties of $g_4(r,n)$ can be found in \cite{Diac}:
\begin{lemma} \label{quarticGausssum}
   We have
\begin{align}
\label{eq:gmult}
 g_4(rs,n) & = \overline{\leg{s}{n}}_4 g_4(r,n), \quad (s,n)=1, \\
\label{2.03}
   g_4(r,n_1 n_2) &=\leg{n_2}{n_1}_4\leg{n_1}{n_2}_4g_4(r, n_1) g_4(r, n_2), \quad (n_1, n_2) = 1, \\
\label{2.04}
g_4(\varpi^k, \varpi^l)& =\begin{cases}
    N(\varpi)^kg_4(\varpi) \qquad & \text{if} \qquad l= k+1, k \equiv 0 \pmod {4},\\
    N(\varpi)^kg_2(\varpi) \qquad & \text{if} \qquad l= k+1, k \equiv 1 \pmod {4},\\
    N(\varpi)^k\leg{-1}{\varpi}_4\overline{g_4}(\varpi) \qquad & \text{if} \qquad l= k+1, k \equiv 2 \pmod {4},\\
    -N(\varpi)^k, \qquad & \text{if} \qquad l= k+1, k \equiv 3 \pmod {4},\\
      \varphi(\varpi^l)=\#(\mz[i]/(\varpi^l))^* \qquad & \text{if} \qquad  k \geq l, l \equiv 0 \pmod {4},\\
      0 \qquad & \text{otherwise}.
\end{cases}
\end{align}
\end{lemma}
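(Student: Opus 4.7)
The plan is to establish the three parts of the lemma directly from the definitions of $g_4(r,n)$, using a change of variable for \eqref{eq:gmult}, the Chinese Remainder Theorem for \eqref{2.03}, and a case-by-case analysis in $k$ versus $l$ for \eqref{2.04}.

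For \eqref{eq:gmult}, I would substitute $x \mapsto s^{-1} x$ in the sum defining $g_4(rs,n)$; since $(s,n)=1$ this is a bijection on $(\mz[i]/n\mz[i])^{*}$, and pulling out the factor $\leg{s^{-1}}{n}_4 = \overline{\leg{s}{n}}_4$ (using that the quartic symbol lies on the unit circle) yields the identity. For \eqref{2.03}, I would invoke CRT to parametrize $x \bmod n_1 n_2$ uniquely as $x = n_2 a + n_1 b$ with $a \bmod n_1$ and $b \bmod n_2$; the quartic symbol factors as $\leg{n_2}{n_1}_4 \leg{n_1}{n_2}_4 \leg{a}{n_1}_4 \leg{b}{n_2}_4$ by multiplicativity in the upper argument, the exponential splits as $\widetilde{e}(ra/n_1)\widetilde{e}(rb/n_2)$, and the double sum factors into the claimed product.

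For \eqref{2.04}, I would treat three regimes. When $k \geq l$, the exponential $\widetilde{e}(\varpi^{k-l} x)$ is identically $1$ since $\widetilde{e}$ is trivial on $\mz[i]$, so $g_4(\varpi^k, \varpi^l)$ reduces to $\sum_{x \bmod \varpi^l} \leg{x}{\varpi}_4^{l}$, which equals $\varphi(\varpi^l)$ when $l \equiv 0 \pmod 4$ (trivial character) and vanishes otherwise by orthogonality of characters on $(\mz[i]/\varpi^l)^{*}$. When $l = k+1$, I would parametrize $x = x_0 + \varpi y$ with $x_0$ running over reduced residues mod $\varpi$ and $y$ mod $\varpi^k$; the symbol becomes $\leg{x_0}{\varpi}_4^{k+1}$ and $\widetilde{e}(\varpi^k x/\varpi^{k+1}) = \widetilde{e}(x_0/\varpi)$, so the $y$-sum contributes $N(\varpi)^k$ and the residual $x_0$-sum is one of $g_4(\varpi)$, $g_2(\varpi)$, $\sum_{x_0}\overline{\leg{x_0}{\varpi}}_4 \widetilde{e}(x_0/\varpi)$, or $\sum_{(x_0,\varpi)=1}\widetilde{e}(x_0/\varpi) = -1$, according to $k \bmod 4$. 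For the remaining range $l \geq k+2$, a further decomposition $x = y_0 + \varpi y_1 + \varpi^{l-k} x_1$ (with $y_0$ a reduced residue mod $\varpi$, $y_1$ mod $\varpi^{l-k-1}$, and $x_1$ mod $\varpi^k$) isolates an inner additive sum $\sum_{y_1 \bmod \varpi^{l-k-1}} \widetilde{e}(y_1/\varpi^{l-k-1})$ that vanishes by orthogonality, forcing the \emph{otherwise} case to be $0$.

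The main delicate step is the case $k \equiv 2 \pmod 4$ in \eqref{2.04}: one must show that
\[ \sum_{x_0 \bmod \varpi}\overline{\leg{x_0}{\varpi}}_4 \widetilde{e}(x_0/\varpi) = \leg{-1}{\varpi}_4 \overline{g_4}(\varpi). \]
This follows by conjugating the defining sum for $g_4(\varpi)$, using $\overline{\widetilde{e}(z)} = \widetilde{e}(-z)$, and then substituting $x_0 \mapsto -x_0$, which produces exactly the factor $\leg{-1}{\varpi}_4$ recorded in the lemma. The remaining verifications are bookkeeping in CRT and character orthogonality on $(\mz[i]/\varpi^l)^{*}$, which is why in practice one simply cites the computation in \cite{Diac} as the authors do.
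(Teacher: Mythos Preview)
Your proof is correct in all three parts; the change of variable for \eqref{eq:gmult}, the CRT factorization for \eqref{2.03}, and the $k$-versus-$l$ case analysis for \eqref{2.04} (including the $\leg{-1}{\varpi}_4\overline{g_4}(\varpi)$ computation via conjugation and $x_0\mapsto -x_0$) all go through as you describe. The paper itself offers no proof and simply cites \cite{Diac}, so your argument supplies exactly the standard verification behind that reference rather than taking a different route.
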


   Similarly, the next lemma allows us to evaluate $g_2(r,n)$ for $n \equiv 1 \pmod {(1+i)^3}$ explicitly.
\begin{lemma} \label{Gausssum}
\begin{enumerate}[(i)]
\item  We have
\begin{align}
\label{2.7}
 g_2(rs,n) & = \overline{\leg{s}{n}} g_2(r,n), \qquad (s,n)=1, \\
   g_2(k,mn) & = g_2(k,m)g_2(k,n),   \qquad  m,n \text{ primary and } (m , n)=1 .
\end{align}
\item Let $\varpi$ be a primary prime in $\mz[i]$. Suppose $\varpi^{h}$ is the largest power of $\varpi$ dividing $k$. (If $k = 0$ then set $h = \infty$.) Then for $l \geq 1$,
\begin{align*}
g_2(k, \varpi^l)& =\begin{cases}
    0 \qquad & \text{if} \qquad l \leq h \qquad \text{is odd},\\
    \varphi(\varpi^l)=\#(\mz[i]/(\varpi^l))^* \qquad & \text{if} \qquad l \leq h \qquad \text{is even},\\
    -N(\varpi)^{l-1} & \text{if} \qquad l= h+1 \qquad \text{is even},\\
    \leg {ik\varpi^{-h}}{\varpi}N(\varpi)^{l-1/2} \qquad & \text{if} \qquad l= h+1 \qquad \text{is odd},\\
    0, \qquad & \text{if} \qquad l \geq h+2.
\end{cases}
\end{align*}
\end{enumerate}
\end{lemma}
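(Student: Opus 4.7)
The plan is to handle part (i) by direct substitution and then tackle part (ii) by a case analysis on $l$ versus $h$, ultimately reducing to a single ``primitive'' Gauss sum.

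For the first identity of (i), substitute $x \mapsto s^{-1}x \bmod n$ in the defining sum; the multiplicativity of the quadratic symbol and the fact that it is $\pm 1$-valued on units produces the factor $\leg{s}{n}^{-1} = \overline{\leg{s}{n}}$. For the second identity, parametrise $x \bmod mn$ through the Chinese Remainder Theorem as $x = n\overline{n}_m u + m\overline{m}_n v$ with $u \bmod m$ and $v \bmod n$; splitting both the phase $\widetilde{e}(kx/(mn))$ and the symbol $\leg{x}{mn}$ across the two factors, and applying the first identity to strip off the scalars $\overline{n}_m$ and $\overline{m}_n$, yields
\begin{equation*}
g_2(k, mn) = \leg{n}{m} \leg{m}{n}\, g_2(k, m)\, g_2(k, n),
\end{equation*}
whereupon the quadratic reciprocity \eqref{quadrecip} for primary $m, n$ kills the remaining symbol factor.

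For part (ii), write $k = \varpi^h k'$ with $(k', \varpi) = 1$. When $l \le h$ the phase $\widetilde{e}(kx/\varpi^l)$ is trivial, so the sum collapses to $\sum_{x \bmod \varpi^l} \leg{x}{\varpi^l}$, which evaluates to $\varphi(\varpi^l)$ for $l$ even and to $0$ for $l$ odd. When $l \ge h + 1$, I substitute $x = y + \varpi^{l-h} z$ with $y \bmod \varpi^{l-h}$ and $z \bmod \varpi^h$; the phase becomes $\widetilde{e}(k'y/\varpi^{l-h})$, independent of $z$, contributing a factor $N(\varpi)^h$, while the symbol restricted to $(x, \varpi) = 1$ depends only on $y$. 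For $l \ge h + 2$, a further split $y = y_1 + \varpi y_2$ and summation over $y_2$ forces $\varpi^{l-h-1} \mid k'$, contradicting $(k', \varpi) = 1$, so the sum vanishes. For $l = h + 1$ even, the symbol is the indicator of $(y, \varpi) = 1$, and the remainder is the Ramanujan-type sum over $(\mz[i]/\varpi)^{\times}$, equal to $-1$, giving $-N(\varpi)^{l-1}$. For $l = h + 1$ odd, the same reduction combined with the first identity of (i) yields
\begin{equation*}
g_2(k, \varpi^l) = N(\varpi)^{l-1}\, \leg{k'}{\varpi}\, g_2(1, \varpi).
\end{equation*}

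The main obstacle is then the sharp evaluation $g_2(1, \varpi) = \leg{i}{\varpi}\, N(\varpi)^{1/2}$ for primary $\varpi$, which inserted into the previous display gives the stated $\leg{i k\varpi^{-h}}{\varpi}\, N(\varpi)^{l-1/2}$. The modulus $|g_2(1,\varpi)| = N(\varpi)^{1/2}$ is the standard Gauss sum absolute value, and the reality of $g_2(1,\varpi)$ follows from the substitution $x \mapsto -x$ together with $\leg{-1}{\varpi} = 1$ for primary $\varpi$, itself a consequence of the supplementary law \eqref{2.05}. Pinning down the sign as $\leg{i}{\varpi}$ requires a case split according to whether the rational prime below $\varpi$ is split or inert, transporting the classical Gauss sum evaluation through the residue field $\mz[i]/\varpi$ and combining with the precise value of $\leg{i}{\varpi}_4$ furnished by \eqref{2.05}.
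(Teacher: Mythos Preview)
Your argument is correct and follows essentially the same route as the paper: part (i) by substitution and CRT together with quadratic reciprocity, and part (ii) by decomposing the residue class modulo $\varpi^l$ and reducing to a single primitive Gauss sum $g_2(\varpi)$. The only noteworthy difference is that the paper obtains the evaluation $g_2(\varpi)=\leg{i}{\varpi}N(\varpi)^{1/2}$ by citing \cite[Proposition 2.2]{Onodera} rather than sketching the sign determination; your outline of that step (modulus, reality via $\leg{-1}{\varpi}=1$, sign via the split/inert dichotomy) is plausible but would need to be filled in carefully, since Gauss-sum sign determinations are delicate.
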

\begin{proof}
(i) The proof is similar to those of \eqref{eq:gmult} and \eqref{2.03}, using the quadratic reciprocity \eqref{quadrecip}. \newline

(ii)
  The case $l \leq h$ is easily verified. If $l > h$, then
\begin{align}
\label{2.8}
    \sum_{a \bmod {\varpi^{l}}}\leg {a}{\varpi^l}\widetilde{e}\left(\frac {ka}{\varpi^l}\right) =  \sum_{b \bmod \varpi}\leg {b}{\varpi^l}\sum_{c
    \bmod {\varpi^{l-1}}}\widetilde{e}\left(\frac {k(c\varpi+b)}{\varpi^l}\right).
\end{align}

    We write the inner sum above as
\begin{align*}
     \widetilde{e}\left(\frac {kb}{\varpi^l}\right)\sum_{c
    \bmod {\varpi^{l-1}}}\widetilde{e}\left(\frac {k\varpi^{-h}c}{\varpi^{l-h-1}}\right)=\widetilde{e}\left(\frac {kb}{\varpi^l}\right)\sum_{c
    \bmod {\varpi^{l-1}}}\widetilde{e}\left(\frac {c}{\varpi^{l-h-1}}\right).
\end{align*}

    When $l \geq h+2$, we write $c = c_1 \varpi^{l-h-1} + c_2$ where $c_1$ varies over a set of
representatives in $\mz[i] \pmod{\varpi^h}$ and $c_2$ varies over a set
of representatives in $\mz[i] \pmod{\varpi^{l-h-1}}$ to see that
\begin{align*}
   \sum_{c \bmod {\varpi^{l-1}}}\widetilde{e}\left(\frac {c}{\varpi^{l-h-1}}\right)
   = N(\varpi^h)
   \sum_{c_2 \bmod {\varpi^{l-h-1}}}\widetilde{e} \left( \frac{c_2}{\varpi^{l-h-1}} \right).
\end{align*}
   Now we can find a $c_3$ such that $\widetilde{e}(c_3/\varpi^{l-h-1}) \neq 1$ (for example, take $c_3=1$ when $\varpi$ is not rational and $c_3=i$ when $\varpi$ is rational) to deduce that
\begin{align*}
  \widetilde{e} \left( \frac{c_3}{\varpi^{l-h-1}} \right) \sum_{c_2 \bmod {\varpi^{l-h-1}}}\widetilde{e} \left( \frac{c_2}{\varpi^{l-h-1}} \right)=\sum_{c_2 \bmod {\varpi^{l-h-1}}}\widetilde{e} \left( \frac{c_2+c_3}{\varpi^{l-h-1}} \right)=\sum_{c_2 \bmod {\varpi^{l-h-1}}}\widetilde{e} \left( \frac{c_2}{\varpi^{l-h-1}} \right).
\end{align*}
  This implies that
\begin{align}
\label{2.9}
   \sum_{c_2 \bmod {\varpi^{l-h-1}}}\widetilde{e} \left( \frac{c_2}{\varpi^{l-h-1}} \right)=0.
\end{align}
   This proves the last case when $l \geq h+2$. \newline

  When $l=h+1$, the right-hand side expression of \eqref{2.8} is
\begin{align*}
    N(\varpi)^{l-1}\sum_{b \bmod \varpi}\leg {b}{\varpi^l}\widetilde{e}\left(\frac {kb}{\varpi^l}\right).
\end{align*}
   If $l$ is even then the last sum above is $-1$ (using \eqref{2.9}) and if $l$ is odd the last sum above is
\begin{align*}
    \sum_{b \bmod \varpi}\leg {b}{\varpi}\widetilde{e}\left(\frac {b(k\varpi^{-h})}{\varpi}\right)=\leg {k\varpi^{-h}}{\varpi}g_2(\varpi)=\leg {ik\varpi^{-h}}{\varpi}N(\varpi)^{1/2},
\end{align*}
where the expression of $g_2(\varpi)$ follows from \cite[Proposition 2.2]{Onodera} (be aware that the definition of the Gauss sum in \cite{Onodera} is different from the one here) and this completes the proof of the lemma.
\end{proof}

\subsection{The Explicit Formula}
\label{section: Explicit Formula}

Our approach in this paper relies on the following explicit formula, which essentially converts a sum over zeros of an
$L$-function to a sum over primes.

\begin{lemma}
\label{lem2.4}
   Let $\phi(x)$ be an even Schwartz function whose Fourier transform
   $\hat{\phi}(u)$ has compact support. Let $c \in \mz[i]$ be square-free satisfying $(c, 1+i)=1, X \leq N(c) \leq 2X$ and let $\chi=\chi_{i(1+i)^5c}$ or $\chi_{(1+i)^7c}$. We have
\begin{equation*}
S(\chi, \phi) =\int\limits^{\infty}_{-\infty}  \phi(t) \dif t-\sum^{2}_{j=1} \left( S_j(\chi,X;\hat{\phi})+S_j(\overline{\chi},X;\hat{\phi}) \right)+O\left(\frac{1}{\log
X}\right),
\end{equation*}
   where
\begin{equation*}
   S_j(\chi,X;\hat{\phi})=\frac 1{\log X}\sum_{\varpi \equiv 1 \bmod {(1+i)^3}}\frac { \log
   N(\varpi)}{\sqrt{N(\varpi^j)}}\chi(\varpi^j)  \hat{\phi}\left( \frac {\log N(\varpi^j)}{\log X} \right)
\end{equation*}
with the sum over $\varpi$ running over primes in $\mz[i]$.
\end{lemma}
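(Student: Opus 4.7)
The strategy is the standard Weil-style explicit formula for the Hecke $L$-function $L(s,\chi)$ of $\mq(i)$, applied to the rescaled test function $h(\gamma)=\phi\bigl(\gamma\log X/(2\pi)\bigr)$, followed by a truncation of the resulting sum over prime powers to exponents $\leq 2$. The plan is to start from the residue-theorem identity expressing $\sum_j h(\gamma_{\chi,j})$ as a contour integral of $(L'/L)(s,\chi)$ against a translate of $h$ around a tall rectangle enclosing the nontrivial zeros, and then move the contour to the lines $\Re s=1+\varepsilon$ and $\Re s=-\varepsilon$. On the latter line I would invoke the functional equation
\[ \Lambda(s,\chi):=(|D_K|N\mathfrak{q})^{s/2}\cdot 2(2\pi)^{-s}\Gamma(s)\,L(s,\chi)=W(\chi)\,\Lambda(1-s,\overline\chi), \]
which holds because $\mq(i)$ has a single complex place and our Hecke characters are of trivial infinity type, with $\mathfrak{q}$ the arithmetic conductor (so $N\mathfrak{q}=32\,N(c)$ for the quadratic family and $128\,N(c)$ for the quartic family). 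The change of variables $s\mapsto 1-s$ on this line then produces the $\overline\chi$-companion of the Dirichlet-series integral from the other side.

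The boundary contribution of the contour shift yields a conductor term $\hat\phi(0)\log(|D_K|N\mathfrak{q})/\log X$, and since $X\leq N(c)\leq 2X$ this equals $\hat\phi(0)+O(1/\log X)=\int_{\rear}\phi(t)\,dt+O(1/\log X)$. The archimedean piece from the $\Gamma(s)$-factor reduces, after the scaling $t=2\pi s/\log X$, to an integral of the form
\[ \frac{1}{\log X}\int_{\rear}\phi(s)\,\mathrm{Re}\!\left[\frac{\Gamma'}{\Gamma}\!\left(\tfrac12+\tfrac{2\pi is}{\log X}\right)\right] ds, \]
which is $O(1/\log X)$ because $s$ ranges over the fixed compact support of $\phi$ while the argument of $\Gamma'/\Gamma$ stays uniformly bounded away from the pole at $0$ as $X\to\infty$.

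On the line $\Re s=1+\varepsilon$ one uses the Dirichlet series
\[ -\frac{L'}{L}(s,\chi)=\sum_{\varpi\equiv 1\bmod (1+i)^3}\sum_{k\geq 1}\frac{\log N(\varpi)\,\chi(\varpi^k)}{N(\varpi)^{ks}}, \]
obtained from the Euler product together with the fact that every prime ideal of $\mz[i]$ other than $(1+i)$ has a unique primary generator (and $\chi((1+i))=0$ automatically, as $(1+i)$ divides the conductor). This together with its $\overline\chi$-counterpart from the functional-equation side produces the full term $-\sum_{k\geq 1}\bigl(S_k(\chi,X;\hat\phi)+S_k(\overline\chi,X;\hat\phi)\bigr)$. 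Finally, the tail $k\geq 3$ is harmless:
\[ \frac{1}{\log X}\sum_{\varpi}\sum_{k\geq 3}\frac{\log N(\varpi)}{N(\varpi)^{k/2}}\ll \frac{1}{\log X}\sum_{\varpi}\frac{\log N(\varpi)}{N(\varpi)^{3/2}}=O(1/\log X), \]
the latter series converging by comparison with $-\zeta'_{\mq(i)}(3/2)/\zeta_{\mq(i)}(3/2)$. The main point requiring care is the archimedean/conductor bookkeeping; once the infinity type is pinned down as trivial, the single $\Gamma$-factor contribution is $O(1/\log X)$ as above and the rest of the argument is routine.
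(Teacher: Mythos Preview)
Your proposal is correct and is precisely the standard Weil explicit formula argument that the paper invokes by reference (the paper gives no details beyond citing \cite[Lemma~4.1]{Gu}). One small slip: you justify the bound on the archimedean integral by saying ``$s$ ranges over the fixed compact support of $\phi$,'' but it is $\hat\phi$, not $\phi$, that is compactly supported; the conclusion still holds because $\phi$ is Schwartz and $\Re\bigl[\Gamma'/\Gamma(\tfrac12+2\pi is/\log X)\bigr]$ grows only logarithmically in $|s|$, so $\int_{\rear}|\phi(s)|\cdot O(\log(2+|s|))\,ds=O(1)$.
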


\begin{proof} The proof is rather standard and goes along the same line as \cite[Lemma 4.1]{Gu}. \end{proof}

    We write $S(\chi,X;\hat{\phi})$ for $S_1(\chi,X;\hat{\phi})$ in the rest of the paper and we deduce from Lemma \ref{lem2.4} the following
\begin{lemma}
\label{lem2.1}
   Let $\phi(x)$ be an even Schwartz function whose Fourier transform
   $\hat{\phi}(u)$ is compactly supported. For any square-free $c \in \mz[i], (c, 1+i)=1, X \leq N(c) \leq 2X$, we have
\begin{align*}
  S(\chi_{i(1+i)^5c}, \phi)   =\int\limits^{\infty}_{-\infty}\phi(t) \dif t-\frac 1{2}
   \int\limits^{\infty}_{-\infty}\hat{\phi}(u) \dif u-2S(\chi_{i(1+i)^5c},X; \hat{\phi})+O \left( \frac {\log \log 3X}{\log
   X} \right),
\end{align*}
    with the implicit constant depending on $\phi$.
\end{lemma}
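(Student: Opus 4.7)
The plan is to deduce this from Lemma \ref{lem2.4} using the reality of the quadratic character $\chi := \chi_{i(1+i)^5c}$. Since $\chi$ takes only real values, $\chi = \overline{\chi}$, and hence $S_j(\chi,X;\hat{\phi}) = S_j(\overline{\chi},X;\hat{\phi})$ for $j = 1, 2$. With the convention $S(\chi,X;\hat{\phi}) = S_1(\chi,X;\hat{\phi})$, Lemma \ref{lem2.4} then reads
\[
S(\chi,\phi) = \int_{-\infty}^{\infty} \phi(t) \, \dif t - 2 S(\chi,X;\hat{\phi}) - 2 S_2(\chi,X;\hat{\phi}) + O\left( \frac{1}{\log X} \right),
\]
so the task reduces to showing that
\[
2 S_2(\chi,X;\hat{\phi}) = \frac{1}{2} \int_{-\infty}^{\infty} \hat{\phi}(u) \, \dif u + O\left( \frac{\log \log 3X}{\log X} \right).
\]

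Next I would compute $S_2$ explicitly. Since $\chi$ is quadratic, $\chi(\varpi^2) = \chi(\varpi)^2$ equals $1$ if $\varpi \nmid c$ and $0$ otherwise (the condition $\varpi \neq 1+i$ being automatic for primary primes). Dropping the constraint $\varpi \nmid c$ changes $S_2$ by at most
\[
\frac{\|\hat{\phi}\|_\infty}{\log X} \sum_{\varpi | c} \frac{\log N(\varpi)}{N(\varpi)} \ll \frac{\log \log X}{\log X}
\]
by the bound discussed below. To the resulting unrestricted sum I would apply partial summation against the prime ideal theorem for primary Gaussian primes,
\[
\sum_{\substack{\varpi \equiv 1 \bmod {(1+i)^3} \\ N(\varpi) \le y}} \log N(\varpi) = y + O\!\left( \frac{y}{(\log y)^A} \right),
\]
valid for any fixed $A > 0$ and deducible from the classical PNT via the splitting behavior of rational primes in $\mz[i]$. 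The substitution $u = 2 \log y / \log X$, together with the compact support and evenness of $\hat{\phi}$, then gives
\[
S_2(\chi,X;\hat{\phi}) = \frac{1}{4} \int_{-\infty}^{\infty} \hat{\phi}(u) \, \dif u + O\!\left( \frac{1}{\log X} \right),
\]
which after doubling produces the evaluation above.

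The main obstacle is securing the uniform bound $\sum_{\varpi | c} \log N(\varpi)/N(\varpi) \ll \log \log X$ across all square-free $c$ with $N(c) \le 2X$; this is precisely the source of the $\log \log 3X$ in the error. Since $\log n / n$ is decreasing for $n \ge e$, the sum is maximized when $c$ is a product of the smallest primary primes, and the constraint $\prod_{\varpi | c} N(\varpi) \le 2X$ combined with $\sum_{N(\varpi) \le Y} \log N(\varpi) \sim Y$ forces these prime divisors to lie below some $Y \asymp \log X$. A Mertens-type estimate in $\mz[i]$ then gives $\sum_{N(\varpi) \le Y} \log N(\varpi)/N(\varpi) \ll \log Y \ll \log \log X$. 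Once this is in place, assembling the pieces yields the stated identity.
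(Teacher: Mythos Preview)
Your proof is correct and follows essentially the same route as the paper's: use Lemma \ref{lem2.4}, exploit that $\chi_{i(1+i)^5c}$ is real to collapse the $j=1,2$ sums, replace $\chi(\varpi^2)$ by $1$ at the cost of $\sum_{\varpi\mid c}\log N(\varpi)/N(\varpi)\ll\log\log 3X$, and evaluate the resulting unrestricted sum via the prime ideal theorem and partial summation. The only cosmetic difference is that the paper quotes the Mertens-type estimate \eqref{mer} directly (incurring an $O(\log\log 3X/\log X)$ error in the main-term step), whereas you invoke a stronger form of the prime ideal theorem to get $O(1/\log X)$ there; either way the $\varpi\mid c$ term dominates and the final bound is the same.
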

\begin{proof}
    Note first that
\begin{equation*}
  \sum_{\varpi | i(1+i)^5c}\frac {\log N(\varpi)}{N(\varpi)} \ll \log \log 3X.
\end{equation*}

   It follows that
\begin{equation*}
\begin{split}
-S_2(\chi_{i(1+i)^5c},X;\hat{\phi})-S_2(\overline{\chi}_{i(1+i)^5c},X;\hat{\phi})
= - \frac 2{\log X}\sum_{\varpi \equiv 1 \bmod {(1+i)^3}} & \frac {\log N(\varpi)}{N(\varpi)}
   \hat{\phi}\left (\frac {2 \log N(\varpi)}{\log X} \right)  \\
   & +O \left( \frac {\log \log 3X}{\log X} \right).
   \end{split}
\end{equation*}
The prime ideal theorem \cite[Theorem 8.9]{MVa1}, together with partial summation, gives that for $x \geq 1$,
\begin{equation} \label{mer}
    \sum_{\substack{ N(\varpi) \leq x \\ \varpi \equiv 1 \bmod {(1+i)^3}}} \frac {\log N(\varpi)}{N(\varpi) }= \log x+O(\log \log 3x).
\end{equation}

    From this and partial summation, we see that
\begin{align*}
 -S_2(\chi_{i(1+i)^5c},X;\hat{\phi})-S_2(\overline{\chi}_{i(1+i)^5c},X;\hat{\phi}) & =  -\frac 2{\log X}\int\limits^{\infty}_{1}\hat{\phi}\left( \frac {2\log t}{\log X}\right) \frac {\dif t}{t}+O\left( \frac {\log \log 3X}{\log X} \right) \\
 & = -\frac 1{2}
\int\limits^{\infty}_{-\infty}\hat{\phi}(t)\dif t+O \left( \frac {\log \log 3X}{\log
    X} \right).
\end{align*}
  The assertion of the lemma follows from this and Lemma \ref{lem2.4}.
\end{proof}

    To estimate the terms $S_2(\chi,X;\hat{\phi})$ and $S_2(\overline{\chi},X;\hat{\phi})$ in Lemma \ref{lem2.4} for $\chi=\chi_{(1+i)^7c}$, we need the following
\begin{lemma}
\label{lem2.7}
Suppose that GRH is true. For any non-principal Hecke character $\chi$ of trivial infinite type with modulus $n$, we have for $x \geq 1$,
\begin{align} \label{lem2.7eq}
  S(x, \chi)=\sum_{\substack {N(\varpi) \leq x \\ \varpi \equiv 1 \bmod {(1+i)^3}}} \chi (\varpi) \log N(\varpi)
\ll \min \left\{ x, \sqrt{x} \log^{3} x \log N(n) \right\}.
\end{align}
\end{lemma}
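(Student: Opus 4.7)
The bound $S(x,\chi) \ll x$ is immediate from the prime ideal estimate \eqref{mer} and $|\chi(\varpi)| \le 1$, so the substantive content is the GRH-dependent bound $\sqrt{x}\log^{3}x \log N(n)$. My plan is first to pass from $S(x,\chi)$ to the von Mangoldt sum
\[
\psi(x,\chi) \;=\; \sum_{N(\mathfrak{a}) \le x} \Lambda(\mathfrak{a})\chi(\mathfrak{a}),
\]
which differs from $S(x,\chi)$ only by the contribution of prime powers $\varpi^{k}$ with $k \ge 2$ and $N(\varpi)^{k} \le x$. That contribution is trivially $O(\sqrt{x}\log x)$ and thus absorbed into the target error, so it suffices to establish the same bound for $\psi(x,\chi)$.

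I would then apply a truncated Perron formula to $-L'/L(s,\chi) = \sum_{\mathfrak{a}} \Lambda(\mathfrak{a})\chi(\mathfrak{a}) N(\mathfrak{a})^{-s}$, obtaining
\[
\psi(x,\chi) \;=\; -\frac{1}{2\pi i} \int_{c-iT}^{c+iT} \frac{L'}{L}(s,\chi)\, \frac{x^{s}}{s}\, ds \;+\; O\!\left( \frac{x\log^{2}x}{T} \right)
\]
with $c = 1 + 1/\log x$ and $T\ge 1$ a parameter. Pushing the contour leftwards to $\Re s = 1/2 + 1/\log x$ crosses no singularities: $L(s,\chi)$ is entire since $\chi$ is non-principal, and under GRH all non-trivial zeros lie on $\Re s = 1/2$. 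The integral along the new vertical line is then bounded using the standard estimate
\[
\frac{L'}{L}(\sigma + it, \chi) \;\ll\; \log(|t|+2)\,\bigl(\log(|t|+2)+\log N(n)\bigr),
\]
valid uniformly for $\sigma \ge 1/2 + 1/\log x$ under GRH, which follows from the Hadamard product for $L(s,\chi)$, the functional equation on $K = \mq(i)$, and the Riemann--von Mangoldt zero-counting bound $N(T,\chi) \ll T\log(T N(n))$. Choosing $T = \sqrt{x}$ balances the Perron error against the contour integral and produces the claimed estimate.

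The main obstacle is obtaining the correct linear dependence on $\log N(n)$ in the pointwise bound for $L'/L$: a crude application of the Hadamard product alone would yield $(\log N(n))^{2}$, whereas the lemma needs only a single factor. This is handled by a careful dyadic partition of the sum $\sum_{\rho}(s-\rho)^{-1}$ in the partial fraction expansion of $L'/L$ according to $|\gamma - t|$, where $\rho = 1/2 + i\gamma$, and invoking the zero-counting estimate dyadically to save one logarithm. The argument closely parallels the classical analysis for Dirichlet $L$-functions in \cite[Chapter 5]{MVa1}; the only substantive changes are the replacement of the analytic conductor of the Dirichlet character by that of the Hecke character of $\mq(i)$, which is $O(N(n))$ for characters of trivial infinite type. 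For this reason one may reasonably omit the detailed computation here.
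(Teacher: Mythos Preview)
Your proposal is correct and follows essentially the same route as the paper: Perron's formula with $c = 1 + 1/\log x$ and $T = \sqrt{x}$, a contour shift to $\Re s = 1/2 + 1/\log x$ (legitimate under GRH since $\chi$ is non-principal), and a pointwise bound for $L'/L$ of the shape $\log(|t|+2)\bigl(\log(|t|+2)+\log N(n)\bigr)$. The paper combines your first two steps---it applies Perron directly to $S(x,\chi)$ and absorbs the prime-power discrepancy into the $\sqrt{x}\log x$ term---and it handles the logarithmic-derivative bound by citing \cite[Lemma~2.18]{Gu} rather than reproving it, but the architecture is identical.
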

\begin{proof}
The proof of this Lemma is rather standard and we give a sketch here.  The details can be found in the proof of \cite[Lemma 2.19]{Gu}.  First, it follows from the prime ideal theorem \cite[Theorem 8.9]{MVa1} and partial summation that
\[ S(x, \chi) \ll x. \]
Now using Perron's formula with $a=1+1/\log x$, we have
\begin{equation} \label{afterperron}
 - \frac{1}{2\pi i} \int\limits_{a-i\sqrt{x}}^{a+i\sqrt{x}} \frac{L'(s,\chi)}{L(s,\chi)} \frac{x^s}{s} \dif s - S(x,\chi) \ll \sqrt{x} \log x + \sqrt{x} \sum_{\mathfrak{a}} \frac{\Lambda(\mathfrak{a})}{N(\mathfrak{a})^a | \log (x/N(\mathfrak{a}))|} ,
 \end{equation}
where $\Lambda(\mathfrak{a})$ is the analogue of the von Mangoldt function in $\mz[i]$.  Breaking up the sum over $\mathfrak{a}$ into the ranges
\[ N(\mathfrak{a}) \leq  x/2, \; x/2 < N(\mathfrak{a}) \leq x, \; x < N(\mathfrak{a}) \leq 3x/2, \; 3x/2 < N(\mathfrak{a}) \]
and using different lower bounds for $|\log (x/N(\mathfrak{a}))|$ for each of these ranges, we get that
\[  \sum_{\mathfrak{a}} \frac{\Lambda(\mathfrak{a})}{N(\mathfrak{a})^a | \log (x/N(\mathfrak{a}))|} \ll \log^2 x . \]
Now assuming the truth of GRH, we can move the line of integration in \eqref{afterperron} to $1/2+1/\log x$ without picking up any residue.  Applying standard bounds for $L'(s,\chi)/L(s,\chi)$ (see \cite[Lemma 2.18]{Gu}) implies that the integral in \eqref{afterperron} is
\[ \ll \sqrt{x} \log^3 x \log N(n), \]
which gives the second term inside the minimum in \eqref{lem2.7eq} and completes the proof.
\end{proof}

    Applying Lemma \ref{lem2.7}, we see that term corresponding to the second sum in the expression of $S(\chi, \phi)$ in Lemma \ref{lem2.4} for $\chi=\chi_{(1+i)^7c}$ (note that in this case $\chi$ is non-principal) contributes
\begin{align*}
 \frac 1{\log X}\int\limits^{\infty}_{1} \frac 1{t} & \phi \left( \frac {2\log t}{\log X} \right) \dif S(t, \chi) \\
  &  \ll \frac 1{\log X}\int\limits^{\infty}_{1} S(t,\chi) \frac 1{t^2} \phi \left( \frac {2\log t}{\log X} \right) \dif t+ \frac 1{\log^2 X}\int\limits^{\infty}_{1} S(t, \chi) \frac 1{t^2}\phi' \left( \frac {2\log t}{\log X} \right) \dif t \\
    & \ll \frac 1{\log X}\int\limits^{\log^3 N(c) }_{1} \frac 1t \dif t+ \frac {\log N(c)}{\log X}\int\limits^{\infty}_{\log^3 N(c)} t^{-3/2+\varepsilon} \dif t \ll \frac {\log \log N(c)}{\log X}.
\end{align*}

We then arrive at the following
\begin{lemma}
\label{lem2.8}
   Let $\phi(x)$ be an even Schwartz function whose Fourier transform
   $\hat{\phi}(u)$ has compact support. Let $\chi=\chi_{(1+i)^7c}$ for any square-free $c \in \mz[i], (c, 1+i)=1, X \leq N(c) \leq 2X$.  We have
\begin{equation*}
S(\chi, \phi) =\int\limits^{\infty}_{-\infty}  \phi(t) \dif t-S(\chi_{i(1+i)^7c},X;\hat{\phi})-S(\overline{\chi}_{(1+i)^7c},X;\hat{\phi}) +O\left(\frac{\log \log 3X}{\log
X}\right),
\end{equation*}
   with the implicit constant depending on $\phi$.
\end{lemma}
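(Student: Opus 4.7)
The plan is to apply the explicit formula of Lemma~\ref{lem2.4} and then show that, in contrast to the quadratic situation of Lemma~\ref{lem2.1}, the $j=2$ terms $S_2(\chi,X;\hat\phi) + S_2(\overline{\chi},X;\hat\phi)$ contribute only to the error $O(\log\log 3X/\log X)$ rather than producing a secondary main term. The arithmetic reason is that for $\chi = \chi_{(1+i)^7c}$ of order $4$, the square $\chi^2$ is a \emph{non-principal} quadratic Hecke character of trivial infinite type with modulus dividing $(1+i)^7c$, provided $c$ is square-free with $N(c)$ large. This stands in contrast to the setup of Lemma~\ref{lem2.1}, where $\chi_{i(1+i)^5c}^2$ is the principal character and hence produces a main term via the prime ideal theorem.

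Concretely, I would first invoke Lemma~\ref{lem2.4} to obtain
\begin{equation*}
S(\chi,\phi) = \int\limits_{-\infty}^{\infty}\phi(t)\,\dif t - \bigl(S(\chi,X;\hat\phi) + S(\overline{\chi},X;\hat\phi)\bigr) - \bigl(S_2(\chi,X;\hat\phi) + S_2(\overline{\chi},X;\hat\phi)\bigr) + O(1/\log X),
\end{equation*}
using the notational convention $S_1 = S$ introduced right after Lemma~\ref{lem2.4}. It then suffices to prove that $S_2(\chi,X;\hat\phi) + S_2(\overline{\chi},X;\hat\phi) \ll \log\log 3X/\log X$.

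For this, I would note that $\chi(\varpi^2) + \overline{\chi}(\varpi^2) = \chi^2(\varpi) + \overline{\chi^2}(\varpi)$ and rewrite the $S_2$ sum as a sum over primes $\varpi \equiv 1 \pmod{(1+i)^3}$ involving the non-principal quadratic character $\chi^2$ and its conjugate. A standard partial summation converts this into an integral against $S(t,\chi^2) = \sum_{N(\varpi)\leq t,\, \varpi\equiv 1(\mathrm{mod}\,(1+i)^3)} \chi^2(\varpi)\log N(\varpi)$, exactly as carried out in the display immediately preceding the statement of the lemma. Applying Lemma~\ref{lem2.7} with the dichotomy $\min\{t,\sqrt{t}\log^3 t\log N((1+i)^7c)\}$ and splitting the $t$-range at $t = \log^3 N(c)$ --- using the trivial bound $S(t,\chi^2)\ll t$ on the short range and the GRH bound $S(t,\chi^2)\ll \sqrt{t}\log^3 t\log N(c)$ on the tail --- yields the required bound.

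The only substantive point requiring care is the non-principality of $\chi^2$, which proceeds along the same lines as the primitivity argument for $\chi_{i(1+i)^5c}$ given in Section~\ref{sect: Kronecker}: since $c$ is square-free and coprime to $1+i$, the quadratic symbol $\bigl(\tfrac{(1+i)^7c}{\cdot}\bigr)$ cannot be trivial on all primary residues, so $\chi^2$ is a genuinely nontrivial Hecke character with $\log N((1+i)^7c) \ll \log X$. Once this verification is in hand, combining the display above with the $O(\log\log 3X/\log X)$ estimate yields precisely the stated formula, with the implicit constant depending on $\phi$ only through $\|\hat\phi\|_\infty$ and the length of its support.
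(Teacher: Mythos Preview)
Your proposal is correct and follows essentially the same route as the paper: apply Lemma~\ref{lem2.4}, then bound the $j=2$ contribution via partial summation against Lemma~\ref{lem2.7}, splitting the range at $\log^3 N(c)$. In fact you are slightly more precise than the paper's own write-up, which records the integrand as $\dif S(t,\chi)$ whereas, as you correctly observe, the character actually governing the $S_2$-term is $\chi^2$; your explicit verification that $\chi^2$ is a non-principal quadratic Hecke character (needed to invoke Lemma~\ref{lem2.7}) fills in a point the paper leaves implicit.
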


\subsection{Poisson Summation}
The proofs of Theorems \ref{quadraticmainthm} and \ref{quarticmainthm} require the following Poisson summation formula.
\begin{lemma}
\label{Poissonsum} Let $n \in \mz[i],  n \equiv 1 \pmod {(1+i)^3}$ and $\chi$ a quadratic or quartic character $\pmod {n}$. For any Schwartz class function $W$,  we have for all $a>0$,
\begin{align*}
   \sum_{m \in \mz[i]}\chi(m)W\left(\frac {aN(m)}{X}\right)=\frac {X}{aN(n)}\sum_{k \in
   \mz[i]}g(k,n)\widetilde{W}\left(\sqrt{\frac {N(k)X}{aN(n)}}\right),
\end{align*}
   where
\begin{align*}
   \widetilde{W}(t) &=\int\limits^{\infty}_{-\infty}\int\limits^{\infty}_{-\infty}W(N(x+yi))\widetilde{e}\left(- t(x+yi)\right)\dif x \dif y, \quad t \geq 0 \quad \mbox{and} \quad
   g(k,n) =\sum_{r \bmod n}\chi(r)\widetilde{e}\left(\frac {kr}{n}\right).
\end{align*}
\end{lemma}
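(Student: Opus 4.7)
The plan is to reduce the sum over $m\in\mz[i]$ to a shifted lattice sum by decomposing modulo $n$, and then apply the classical two-dimensional Poisson summation formula over the lattice $\mz[i]\subset\mr^{2}$ with respect to the additive character $\widetilde{e}$.

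First, I would split the sum according to residue classes: writing every $m\in\mz[i]$ uniquely as $m=r+n\ell$ with $r$ ranging over a complete set of residues modulo $n$ and $\ell\in\mz[i]$, and using $\chi(r+n\ell)=\chi(r)$, we get
\begin{align*}
\sum_{m\in\mz[i]}\chi(m)\,W\!\left(\tfrac{aN(m)}{X}\right)
=\sum_{r\bmod n}\chi(r)\sum_{\ell\in\mz[i]}F\!\left(\tfrac{r}{n}+\ell\right),
\end{align*}
where $F(z):=W\!\left(aN(n)N(z)/X\right)$ (using $N(r+n\ell)=N(n)\,N(r/n+\ell)$). Since $W$ is Schwartz, $F$ is Schwartz on $\mc\cong\mr^{2}$, so Poisson summation is applicable.

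Next, I would verify that $\mz[i]$ is self-dual for the pairing $(z,w)\mapsto\widetilde{e}(zw)$. Using $\widetilde{e}(z)=e(\operatorname{Im} z)$, one checks $\widetilde{e}(w)=e(\operatorname{Im} w)$ and $\widetilde{e}(iw)=e(\operatorname{Re} w)$, both trivial iff $w\in\mz[i]$; the fundamental domain has Lebesgue measure $1$. Hence the shifted Poisson formula yields
\begin{align*}
\sum_{\ell\in\mz[i]}F\!\left(\tfrac{r}{n}+\ell\right)
=\sum_{k\in\mz[i]}\widehat{F}(k)\,\widetilde{e}\!\left(\tfrac{kr}{n}\right),
\qquad
\widehat{F}(k)=\iint_{\mr^{2}}F(x+yi)\,\widetilde{e}(-k(x+yi))\,\dif x\,\dif y.
\end{align*}
Scaling $(x,y)\mapsto\sqrt{X/(aN(n))}\,(u,v)$ extracts the Jacobian $X/(aN(n))$ and converts $F$ into $W(N(u+vi))$, with the exponential becoming $\widetilde{e}(-k\sqrt{X/(aN(n))}\,(u+vi))$. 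Because $W(N(\cdot))$ is rotationally invariant in $(u,v)$, the resulting integral depends only on $|k\sqrt{X/(aN(n))}|=\sqrt{N(k)X/(aN(n))}$; rotating $(u,v)$ so that $k\sqrt{X/(aN(n))}$ becomes positive real identifies the integral with $\widetilde{W}\!\left(\sqrt{N(k)X/(aN(n))}\right)$.

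Finally, swapping the orders of summation (justified by the rapid decay of $\widetilde{W}$, which is guaranteed since $W$ is Schwartz and $\widehat{F}$ is Schwartz in $k$) and collecting the $r$-sum,
\begin{align*}
\sum_{r\bmod n}\chi(r)\,\widetilde{e}\!\left(\tfrac{kr}{n}\right)=g(k,n),
\end{align*}
produces the claimed identity. The only nontrivial points are the self-duality check for the pairing $\widetilde{e}(zw)$ and the rotational-invariance argument used to collapse the two-dimensional Fourier transform of $W\circ N$ into the one-variable kernel $\widetilde{W}$; everything else is a routine change of variables and rearrangement. I expect the rotational-invariance step to be the single place where care is required, since it is where the hypothesis that $W$ depends on $N(m)$ alone is essential.
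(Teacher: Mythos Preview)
Your proof is correct and follows essentially the same approach as the paper: split into residues modulo $n$, apply two-dimensional Poisson summation over $\mz[i]$, and change variables to extract the Jacobian $X/(aN(n))$ and the phase $\widetilde{e}(kr/n)$. The only cosmetic difference is that the paper packages your rotational-invariance step into a single explicit substitution $u+vi = \sqrt{N(n/k)}\,\tfrac{k}{n}\,\tfrac{r+(x+yi)n}{\sqrt{X/a}}$ (the unimodular factor $\sqrt{N(n/k)}\,k/n$ is precisely the rotation you describe), whereas you argue the dependence on $|k|$ abstractly via the radial symmetry of $W\circ N$.
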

\begin{proof}
   We first recall the following Poisson summation formula for
   $\mz[i]$ (see the proof of \cite[Lemma 4.1]{G&Zhao}), which is itself an easy consequence of the classical Poisson summation formula in
$2$ dimensions:
\begin{align*}
   \sum_{j \in \mz[i]}f(j)=\sum_{k \in
   \mz[i]} \ \int\limits^{\infty}_{-\infty}\int\limits^{\infty}_{-\infty}f(x+yi)\widetilde{e}\left( -k(x+yi) \right)\dif x \dif y.
\end{align*}
   We get
\begin{align*}
    \sum_{m \in \mz[i]}\chi(m) & W\left(\frac {aN(m)}{X}\right) = \sum_{r \bmod n}\chi(r)\sum_{j \in \mz[i]}W\left(\frac {aN(r+jn)}{X}\right)  \\
   =& \sum_{r \bmod n}\chi(r) \sum_{k \in
   \mz[i]} \ \int\limits^{\infty}_{-\infty}\int\limits^{\infty}_{-\infty}W\left(\frac {aN(r+(x+yi)n)}{X}\right)\widetilde{e}\left(-k(x+yi) \right) \dif
   x \dif y.
\end{align*}
   We make a change of variables in the integral, writing
\begin{equation*}
   \sqrt{N\Big(\frac {n}{k}\Big )}\frac {k}{n}\frac {(r+(x+yi)n)}{\sqrt{X/a}}=u+vi,
\end{equation*}
   with $u,v \in \mr$. (If $k=0$, we omit the factors involving $k/n$.) With the Jacobian of
this transformation being $aN(n)/X$, we find that
\begin{equation*}
\begin{split}
  \int\limits^{\infty}_{-\infty}\int\limits^{\infty}_{-\infty} & W\left(\frac {aN(r+(x+yi)n)}{X}\right)\widetilde{e}\left(-k(x+yi)\right)\dif x \dif y \\
 &  = \frac {X}{aN(n)}\widetilde{e}\left(\frac {kr}{n}\right)\int\limits^{\infty}_{-\infty}\int\limits^{\infty}_{-\infty}
   W( N(u+vi))\widetilde{e}\left (-(u+vi)\sqrt{N \left( \frac{k}{n} \right) \frac{X}{a}} \right) \dif u \dif v,
   \end{split}
\end{equation*}
   whence
\begin{align*}
    \sum_{m \in \mz[i]}W\left(\frac {aN(m)}{X}\right)\chi(m) &= \frac {X}{aN(n)}\sum_{k \in
   \mz[i]}\widetilde{W}\left(\sqrt{\frac {N(k)X}{aN(n)}}\right)\sum_{r \bmod n}\chi(r)\widetilde{e}\left(\frac {kr}{n}\right).
\end{align*}
   As the inner sum of the last expression above is $g(k,n)$ by definition, this completes the proof of the lemma.
\end{proof}

    From Lemma \ref{Poissonsum}, we readily deduce the following
\begin{corollary}
\label{Poissonsumformodd} Let $n \in \mz[i],   n \equiv 1 \pmod {(1+i)^3}$ and $\chi$ a quadratic or quartic character  $\pmod {n}$. For any Schwartz class function $W$,  we have
\begin{align*}
   \sum_{\substack {m \in \mz[i] \\ (m,1+i)=1}}\chi(m)W\left(\frac {N(m)}{X}\right)=\frac {X}{2N(n)}\chi(1+i)\sum_{k \in
   \mz[i]}(-1)^{N(k)} g(k,n)\widetilde{W}\left(\sqrt{\frac {N(k)X}{2N(n)}}\right).
\end{align*}
\end{corollary}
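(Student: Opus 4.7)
The plan is to derive the corollary from Lemma \ref{Poissonsum} by writing the coprimality condition as $\mathbf{1}_{(m, 1+i) = 1} = 1 - \mathbf{1}_{(1+i) \mid m}$ and applying the lemma to each resulting piece. First I would write
\begin{align*}
\sum_{\substack{m \in \mz[i] \\ (m, 1+i) = 1}} \chi(m) W\left(\frac{N(m)}{X}\right) = \sum_{m \in \mz[i]} \chi(m) W\left(\frac{N(m)}{X}\right) - \chi(1+i) \sum_{m' \in \mz[i]} \chi(m') W\left(\frac{2N(m')}{X}\right),
\end{align*}
where the second identity comes from the substitution $m = (1+i) m'$ in the sum over multiples of $1+i$, together with the multiplicativity of $\chi$.

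Then I would apply Lemma \ref{Poissonsum} to the first sum with $a = 1$ and to the second sum with $a = 2$. The arguments of $\widetilde{W}$ that emerge are $\sqrt{N(k) X / N(n)}$ and $\sqrt{N(k) X / (2 N(n))}$, respectively. To align them, I would re-index the first sum via $k \mapsto k/(1+i)$ with $k$ now running over $(1+i) \mz[i]$; this turns the argument into $\sqrt{N(k) X / (2 N(n))}$ as desired, but it requires evaluating $g(k/(1+i), n)$. Because $(n, 1+i) = 1$ (as $n \equiv 1 \pmod{(1+i)^3}$), a short computation substituting $r \mapsto (1+i) s$ in the defining sum of $g(k/(1+i), n)$ and using $(1+i) \mid k$ to ensure that period shifts by multiples of $n$ are trivial yields the key identity
\begin{align*}
g\bigl(k/(1+i), n\bigr) = \chi(1+i) \, g(k, n), \qquad (1+i) \mid k.
\end{align*}

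Combining both pieces then gives
\begin{align*}
\sum_{\substack{m \in \mz[i] \\ (m, 1+i) = 1}} \chi(m) W\left(\frac{N(m)}{X}\right) = \frac{X}{2 N(n)} \chi(1+i) \sum_{k \in \mz[i]} \bigl(2 \mathbf{1}_{(1+i) \mid k} - 1\bigr) \, g(k, n) \, \widetilde{W}\left(\sqrt{\frac{N(k) X}{2 N(n)}}\right).
\end{align*}
Finally, noting that $(1+i) \mid k$ if and only if $N(k)$ is even, so that $2 \mathbf{1}_{(1+i) \mid k} - 1 = (-1)^{N(k)}$, I recover the stated formula. The only step that requires more than direct bookkeeping is the Gauss sum identity above, but this follows routinely once one observes that $1+i$ is a unit modulo $n$.
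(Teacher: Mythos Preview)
Your proof is correct and follows essentially the same route as the paper: both split off the multiples of $1+i$, apply Lemma~\ref{Poissonsum} to each piece, and then re-index the full sum over $k$ to align the arguments of $\widetilde{W}$, using the Gauss sum twist $g((1+i)k,n)=\overline{\chi}(1+i)g(k,n)$ (equivalently your $g(k/(1+i),n)=\chi(1+i)g(k,n)$). The only cosmetic difference is that the paper cites this twist from \eqref{eq:gmult} and \eqref{2.7} rather than rederiving it via the substitution $r\mapsto(1+i)s$.
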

\begin{proof}
     It follows from Lemma \ref{Poissonsum} that
\begin{equation}
\label{2.21}
\begin{split}
  \sum_{\substack {m \in \mz[i] \\ (m,1+i)=1}} & \chi(m)W \left( \frac {N(m)}{X} \right) =\sum_{m}\chi(m)W \left( \frac {N(m)}{X} \right) -\chi(1+i)\sum_{m} \chi(m)W \left( \frac {2N(m)}{X} \right) \\
  &=\frac {X}{N(n)}\sum_{k \in
   \mz[i]}g(k,n)\widetilde{W}\left(\sqrt{\frac {N(k)X}{N(n)}}\right) -\chi(1+i)\frac {X}{2N(n)}\sum_{k \in
   \mz[i]}g(k,n)\widetilde{W}\left(\sqrt{\frac {N(k)X}{2N(n)}}\right).
   \end{split}
\end{equation}

    Using the relation (see \eqref{eq:gmult} and \eqref{2.7})
\begin{align*}
    g((1+i)k,n)=\overline{\chi}(1+i) g(k,n),
\end{align*}
    we can rewrite the first sum in the last expression of \eqref{2.21} as
\begin{align*}
   \sum_{k \in
   \mz[i]}g(k,n)\widetilde{W}\left(\sqrt{\frac {N((1+i)k)X}{2N(n)}}\right)  &= \chi(1+i)\sum_{k \in
   \mz[i]}g((1+i)k,n)\widetilde{W}\left(\sqrt{\frac {N((1+i)k)X}{2N(n)}}\right)  \\
   &=\chi(1+i)\sum_{\substack {k \in \mz[i] \\ 1+i | k}}g(k,n)\widetilde{W}\left(\sqrt{\frac {N(k)X}{2N(n)}}\right) .
\end{align*}
Substituting this back to last expression in \eqref{2.21}, we get the desired result.
\end{proof}

    Suppose that $W(t)$ is a non-negative smooth function supported on $(1,2)$, satisfying $W(t)=1$ for $t \in (1+1/U, 2-1/U)$ and $W^{(j)}(t) \ll_j U^j$ for all integers $j \geq 0$.
    One shows using integration by parts and our assumptions on $W$ that
\begin{align}
\label{bounds}
     \widetilde{W}^{(\mu)}(t) \ll_{j} U^{j-1}|t|^{-j}
\end{align}
    for all integers $\mu \geq 0$, $j \geq 1$ and all real $t$. \newline

On the other hand, evaluating $\widetilde{W}(t)$ with polar coordinate gives
\begin{equation*}
\begin{split}
    \widetilde{W}(t) =\int\limits_{\mr^2}\cos (2\pi t y)W(x^2+y^2) \dif x \dif y & =\int\limits^{\infty}_{0}\int\limits^{2\pi}_0\cos (2\pi t r \sin \theta )W(r^2)r \dif r \dif \theta \\
    & =\int\limits^{\sqrt{2}}_{1}\int\limits^{2\pi}_0\cos (2\pi t r \sin \theta )r \dif r \dif \theta+O \left( \frac 1{U} \right).
    \end{split}
\end{equation*}

    In particular, we have
\begin{align}
\label{w0}
    \widetilde{W}(0) =\pi+O(\frac 1{U}).
\end{align}

    Similarly, for any $j \geq 0$, we have
\begin{align}
\label{bounds1'}
    \widetilde{W}^{(j)}(t) \ll 1.
\end{align}

    As $\Phi(t)$ satisfies the assumptions on $W(t)$, the estimations \eqref{bounds} -- \eqref{bounds1'} are also valid for $\widetilde{\Phi}(t)$.  So in the sequel, we shall use these estimations for $\widetilde{\Phi}(t)$ without further justification.

\section{Proof of Theorem \ref{quadraticmainthm}}
\label{Section 3}
\subsection{Evaluation of  $C(X)$}

     We have
\begin{equation} \label{squarefreenum}
    \sumstar_{\substack {N(c) \leq X \\ (c, 1+i)=1}} 1 =\sum_{\substack {N(c) \leq X \\ (c, 1+i)=1}}\mu^2_{[i]}(c)=\sum_{\substack {N(c) \leq X \\ (c, 1+i)=1}}\sum_{\substack { d^2 | c \\ d \equiv 1 \bmod {(1+i)^3 }}}\mu_{[i]}(d)
    =\sum_{\substack { N(d) \leq \sqrt{X} \\ d \equiv 1 \bmod {(1+i)^3 }}}\mu_{[i]}(d) \sum_{\substack {N(c) \leq X/N(d^2) \\ (c, 1+i)=1}} 1,
\end{equation}
    where the ``$*$'' on the sum over $c$ means that the sum is restricted to square-free elements $c$ of $\mathbb{Z}[i]$. \newline

The best known result \cite{Huxley1} for the Gauss circle problem gives that
\begin{align*}
  \sum_{N(a) \leq x} 1 = \pi x+O(x^{\theta})
\end{align*}
with $\theta = 131/146$.  This implies that
\begin{align*}
     \sum_{\substack {N(c) \leq X/N(d^2) \\ (c, 1+i)=1}} 1 = \frac {\pi X}{2N(d^2)}+O \left( \left (\frac {X}{N(d^2)} \right)^{\theta} \right).
\end{align*}

Inserting the above into \eqref{squarefreenum}, we get
\begin{equation*}
    \sumstar_{\substack {N(c) \leq X \\ (c, 1+i)=1}} 1 =\pi X \sum_{\substack { N(d) \leq \sqrt{X} \\ d \equiv 1 \bmod {(1+i)^3 }}}\frac {\mu_{[i]}(d)}{2N(d^2)} +O\left( X^{\theta} \right) = \frac {2 \pi X}{3\zeta_{\mq(i)}(2)}+O\left( X^{\theta} \right).
\end{equation*}

    We conclude from this that as $X \rightarrow \infty$,
\begin{align*}
   \sumstar_{(c, 1+i)=1}\Phi \left( \frac {N(c)}{X} \right) \sim \# C (X) \sim \frac {2 \pi X}{3\zeta_{\mq(i)}(2)}.
\end{align*}

    It follows from this and Lemma \ref{lem2.1} that the left-hand side of \eqref{quaddensity} equals
\begin{align}
\label{3.01}
    \int\limits^{\infty}_{-\infty}\phi(t) \dif t-\frac 1{2}
   \int\limits^{\infty}_{-\infty}\hat{\phi}(u) \dif u-2\lim_{X \rightarrow \infty}\frac {S(X, Y;\hat{\phi}, \Phi)}{\# C(X)\log X },
\end{align}
    where
\begin{align*}
    S(X,Y; \hat{\phi}, \Phi) =
    \sumstar_{(c, 1+i)=1} \ \sum_{\substack{ N(\varpi) \leq Y \\ \varpi \equiv 1 \bmod {(1+i)^3}}} \frac {\chi_{i(1+i)^5c}(\varpi)\log N(\varpi)}{\sqrt{N(\varpi)}}\hat{\phi} \left( \frac {\log N(\varpi)}{\log X} \right) \Phi \left( \frac {N(c)}{X} \right).
\end{align*}
    Here $\hat{\phi}(u)$ is smooth and has its support contained in the interval $(-2+\varepsilon, 2-\varepsilon)$ for some $0<\varepsilon<1$. To emphasize this condition, we shall set $Y=X^{2-\varepsilon}$ and write the condition $N(\varpi) \leq Y$ explicitly throughout this section . \newline

    On the other hand, observe that when ${\hat \phi}$ is supported in $(-2,2)$, we
     have
\begin{align}
\label{3.02}
  \int\limits^{\infty}_{-\infty}\phi(t) \dif t-\frac 1{2}
   \int\limits^{\infty}_{-\infty}\hat{\phi}(u) \dif u+\frac {1}{2} \int\limits^{\infty}_{-\infty}(1-\chi_{[-1,1]}(u)) \hat{\phi}(u) \dif u=\int\limits^{\infty}_{-\infty}\phi(t)\left( 1-\frac {\sin (2 \pi t)}{2 \pi
   t} \right) \dif t.
\end{align}

   Comparing \eqref{3.01} with \eqref{3.02}, we see that in order to establish Theorem \ref{quadraticmainthm}, it suffices to show that for any Schwartz function $\phi$ with $\hat{\phi}$ supported in
$(-2+\varepsilon, 2-\varepsilon)$ for any $0<\varepsilon<1$,
\begin{equation}
\label{01.50}
  \lim_{X \rightarrow \infty} \frac{S(X, Y;\hat{\phi}, \Phi)}{X \log X}=-\frac {\pi }{6\zeta_{\mq(i)}(2)} \int\limits^{\infty}_{-\infty} \left( 1-\chi_{[-1,1]}(t) \right) \hat{\phi}(t) \dif t.
\end{equation}
\subsection{Expressions $S_M(X,Y; \hat{\phi}, \Phi)$ and $S_R(X,Y; \hat{\phi}, \Phi)$ }
    Let $Z >1$ be a real parameter to be chosen later and write
     $\mu_{[i]}^2(c)=M_Z(c)+R_Z(c)$ where
\begin{equation*}
    M_Z(c)=\sum_{\substack {l^2|c \\ N(l) \leq Z}}\mu_{[i]}(l) \; \quad \mbox{and} \; \quad  R_Z(c)=\sum_{\substack {l^2|c \\ N(l) >
    Z}}\mu_{[i]}(l).
\end{equation*}

   Define
\[ S_M(X,Y; \hat{\phi}, \Phi) =\sum_{(c, 1+i)=1}M_Z(c) \sum_{\substack{ \varpi \equiv 1 \bmod {(1+i)^3} \\ N(\varpi) \leq Y}} \frac {\log N(\varpi)}{\sqrt{N(\varpi)}}\leg {i(1+i)c}{\varpi}  \hat{\phi} \left( \frac {\log N(
   \varpi)}{\log X} \right) \Phi\left( \frac {N(c)}{X} \right),\]
    and
\[ S_R(X,Y; \hat{\phi}, \Phi)
=\sum_{(c, 1+i)=1}R_Z(c) \sum_{\substack{ \varpi \equiv 1 \pmod {(1+i)^3} \\ N(\varpi) \leq Y}} \frac {\log N(\varpi)}{\sqrt{N(\varpi)}}\leg {i(1+i)c}{\varpi}\hat{\phi} \left( \frac {\log N(
   \varpi)}{\log X} \right) \Phi\left( \frac {N(c)}{X} \right), \]
so that $S(X,Y; \hat{\phi}, \Phi)=S_M(X,Y; \hat{\phi}, \Phi)+S_R(X,Y; \hat{\phi}, \Phi)$. \newline

    Using standard techniques (see \eqref{error1} below), we can show that, by choosing $Z$ appropriately, $S_R(X,Y; \hat{\phi}, \Phi)$ is small. We now give another expression for $S_M(X,Y; \hat{\phi},\Phi)$ using Poisson summation. We write it as
\begin{equation*}
\begin{split}
     S_{M}(X,Y; \hat{\phi}, \Phi)   = \sum_{\varpi \equiv 1 \bmod {(1+i)^3}} & \frac {\log N(\varpi)}{\sqrt{N(\varpi)}} \leg{i(1+i)}{\varpi} \hat{\phi} \left( \frac {\log N(   \varpi)}{\log X} \right) \\
   & \times \sum_{\substack {N(l) \leq Z \\ l \equiv 1 \bmod {(1+i)^3}}} \mu_{[i]}(l)\leg {l^2}{\varpi}  \sum_{\substack {c \in \mz[i] \\ (c, 1+i)=1} } \leg {c}{\varpi}\Phi \left( \frac {N(cl^2)}{X} \right).
   \end{split}
\end{equation*}

    Applying Corollary \ref{Poissonsumformodd}, we obtain that
\begin{align*}
    \sum_{\substack {c \in \mz[i] \\ (c, 1+i)=1} } \leg {c}{\varpi}\Phi \left( \frac {N(cl^2)}{X} \right)
  & = \frac {X}{2N(l^2\varpi )} \leg {1+i}{\varpi}\sum_{k \in
   \mz[i]}(-1)^{N(k)}g_2(k, \varpi)\widetilde{\Phi}\left(\sqrt{\frac {N(k)X}{2N(l^2\varpi)}}\right) \\
  & = \frac {X}{2N(l^2)\sqrt{N(\varpi )}} \leg {1+i}{\varpi}\sum_{k \in
   \mz[i]}(-1)^{N(k)} \leg{ik}{\varpi} \widetilde{\Phi}\left(\sqrt{\frac {N(k)X}{2N(l^2\varpi)}}\right),
\end{align*}
  where the last equality above follows from Lemma \ref{Gausssum} by noting that
\[   g_2(k, \varpi)=\leg {ik}{\varpi}N(\varpi)^{1/2}. \]

   We can now recast $S_M(X,Y; \hat{\phi}, \Phi)$ as
\begin{equation}
\label{3.6}
\begin{split}
   S_{M}&(X,Y; \hat{\phi}, \Phi)  \\
      =& \frac {X}{2}\sum_{\substack {N(l) \leq Z \\ l \equiv 1 \bmod {(1+i)^3}}} \frac {\mu_{[i]}(l)}{N(l^2)} \sum_{k \in
   \mz[i]}(-1)^{N(k)} \\
    & \hspace*{2cm} \times \sum_{\varpi \equiv 1 \bmod {(1+i)^3}} \frac {\log N(\varpi)}{N(\varpi)}\leg {kl^2}{\varpi}\hat{\phi}\left( \frac {\log N(
   \varpi)}{\log X} \right) \widetilde{\Phi}\left(\sqrt{\frac {N(k)X}{2N(l^2\varpi)}}\right).
   \end{split}
\end{equation}

The remainder of this section is devoted to the evaluations of $S_R(X,Y; \hat{\phi},\Phi)$ and $S_{M}(X,Y; {\hat \phi}, \Phi)$.
\subsection{Estimation of $S_R(X,Y; \hat{\phi}, \Phi)$}
\label{sec 3.1}
We first seek a bound for
\begin{equation*}
  E(Y; \chi_{i(1+i)^5cl^2}, \hat{\phi}) =\sum_{\substack {N(\varpi) \leq Y \\ \varpi \equiv 1 \bmod {(1+i)^3} }} \frac {\chi_{i(1+i)^5cl^2}(\varpi)\log N(\varpi)}{\sqrt{N(\varpi)}}\hat{\phi}\left( \frac {\log N(\varpi)}{\log X} \right),
\end{equation*}
   with $Y \leq X^{2-2\varepsilon}, (cl, 1+i)=1, X \leq N(cl^2) \leq 2X$. \newline

  Note that $\chi_{i(1+i)^5cl^2}$ is non-principal. It follows from Lemma \ref{lem2.7} and partial summation that
\begin{equation} \label{3.1}
  E(Y;\chi_{i(1+i)^5cl^2}, \hat{\phi})=\int\limits^{Y}_1 \frac {1}{\sqrt{u}}\hat{\phi}\left( \frac {\log u}{\log X} \right) \dif O \left( u^{1/2}\log^{3} (u) \log N(cl^2) \right) \ll \log^{5} (X).
\end{equation}

   We then deduce that
\begin{equation} \label{error1}
\begin{split}
    S_R(X,Y; \hat{\phi}, \Phi) &=\sum_{\substack {N(l) > Z \\ l \equiv 1 \bmod {(1+i)^3} }} \mu_{[i]}(l)   \sum_{\substack {c \in \mz[i] \\ (c, 1+i)=1} } E(Y;\chi_{i(1+i)^5cl^2}, \hat{\phi})\Phi \left( \frac {N(cl^2)}{X} \right) \\
     &\ll \sum_{N(l)>Z} \ \sum_{X/N(l)^2 \leq N(c) \leq
   2X/N(l)^2} \log^{5}(X) \ll \frac {X \log^{5} X}{Z}.
\end{split}
\end{equation}

\subsection{Estimation of $S_{M}(X,Y; {\hat \phi}, \Phi)$,  the second main term}

    It follows from \eqref{3.6} that the sum in $S_{M}(X,Y; {\hat \phi}, \Phi)$ corresponding to $k=0$ is zero.  We show in what follows that terms $k=\square$ ($k$ is a square), $k \neq 0$ in $S_M(X,Y; \hat{\phi}, \Phi)$ contribute a second main term. Before we proceed, we need the following result:
\begin{lemma}
\label{Poissonsumoverk} For $y>0$,
\begin{align*}
   \sum_{\substack {k \in \mz[i] \\ k \neq 0}}(-1)^{N(k)}\widetilde{\Phi}\left(\frac {N(k)}{y}\right)=-\widetilde{\Phi}\left(0\right)+O \left( \frac {U^{2}}{y^{1/2}} \right).
\end{align*}
\end{lemma}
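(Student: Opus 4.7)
The plan is to use two-dimensional Poisson summation combined with an explicit computation of the relevant Fourier transform. Since $(-1)^{N(k)} = (-1)^{a+b}$ for $k = a+bi \in \mz[i]$, Poisson summation with the shift character $e^{\pi i(a+b)}$ gives
\[
\sum_{k \in \mz[i]} (-1)^{N(k)} \widetilde{\Phi}\left(\frac{N(k)}{y}\right) = \sum_{(m,n) \in \mz^2} \hat{F}\left(m+\tfrac{1}{2},\, n+\tfrac{1}{2}\right),
\]
where $F(x,z) = \widetilde{\Phi}((x^2+z^2)/y)$ and $\hat{F}$ denotes its two-dimensional Fourier transform. Separating the $k=0$ term on the left contributes $\widetilde{\Phi}(0)$, and moving it to the right produces the claimed main term $-\widetilde{\Phi}(0)$. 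It therefore suffices to bound the dual sum by $O(U^2/y^{1/2})$.

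To evaluate $\hat{F}$, I would substitute the defining integral $\widetilde{\Phi}(t) = \int\!\int \Phi(u^2+v^2) e^{-2\pi i t v}\, \dif u\, \dif v$ (recalling $\widetilde{e}(-t(u+vi)) = e^{-2\pi i t v}$) and interchange the orders of integration. The inner $(x,z)$-integral factors as a product of two one-dimensional Fresnel integrals, each evaluable in closed form. Using the evenness in $v$ of $M(v) := \int_\mr \Phi(u^2+v^2)\, \dif u$ to discard the odd-in-$v$ contributions, one arrives at
\[
\hat{F}(\xi, \eta) = y \int_0^{\sqrt{2}} \frac{M(v)}{v} \sin\left(\frac{\pi(\xi^2+\eta^2)\, y}{2v}\right) \dif v.
\]
Here $M$ is smooth, even, supported in $[-\sqrt{2}, \sqrt{2}]$, and vanishes to all orders at $\pm\sqrt{2}$ since $\Phi$ vanishes to all orders at $s=2$.

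Substituting $w = 1/v$ then yields $\hat{F}(\xi,\eta) = y \int_{1/\sqrt{2}}^\infty (M(1/w)/w) \sin(\pi(\xi^2+\eta^2)\, y w/2)\, \dif w$. Since $(m+\tfrac12)^2+(n+\tfrac12)^2 \geq 1/2$, the sine has large frequency $\pi(\xi^2+\eta^2)y/2 \geq \pi y/4$, so I would integrate by parts twice. The boundary terms at $w = 1/\sqrt{2}$ vanish because $M(\sqrt{2}) = M'(\sqrt{2}) = 0$, and at $w = \infty$ they vanish by rapid decay. The remaining integral is then bounded by the $L^1$-norm of the second derivative of $M(1/w)/w$, which is $O(U)$: the factor $U$ comes from $\Phi^{(k)} \ll U^k$ supported on a set of measure $O(1/U)$, yielding $\|M''\|_{L^1} \ll U$ in spite of the integrable singularity of $M''(v)$ near $|v|=1$. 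This produces $|\hat{F}(\xi,\eta)| \ll U/((\xi^2+\eta^2)^2 y)$, and summing over the shifted lattice (noting that $\sum_{(m,n)} 1/((m+\tfrac12)^2+(n+\tfrac12)^2)^2$ converges) yields $O(U/y)$, which is dominated by $O(U^2/y^{1/2})$ in the relevant range $y \geq 1$.

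The main obstacle will be the Fresnel-integral computation leading to the clean closed form for $\hat{F}$, and handling the integrable singularity of $M''$ near $|v|=1$ (arising from the circular geometry of the support annulus $\{(u,v) : 1 < u^2+v^2 < 2\}$) when estimating the $L^1$-norm that governs the bound after integration by parts.
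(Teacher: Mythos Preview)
Your approach is correct but takes a different route from the paper's in two respects.

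For the Poisson step, the paper handles $(-1)^{N(k)}$ by the even/odd splitting $\sum_k(-1)^{N(k)}f(k)=2\sum_kf((1+i)k)-\sum_kf(k)$, applies Lemma~\ref{Poissonsum} with $n=1$ to each piece, and obtains the dual sum $y\sum_{(j,1+i)=1}\breve\Phi(\sqrt{N(j)y/2})$ with the coprimality condition. Your use of the twist $e^{\pi i(a+b)}$ and the half-shifted dual lattice is an equivalent bookkeeping; the two dual sums match under the bijection $(m,n)\mapsto j=(m+n+1)+(n-m)i$, and one has $\hat F(\xi,\eta)=y\,\breve\Phi(\sqrt{(\xi^2+\eta^2)y})$.

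The substantive difference is in bounding this dual transform. The paper never goes back to $\Phi$: it integrates by parts three times in $v$ in the defining integral $\breve\Phi(t)=\int\!\!\int\widetilde\Phi(u^2+v^2)\cos(2\pi tv)\,du\,dv$, and then invokes the already-established bounds $\widetilde\Phi^{(\mu)}(t)\ll\min(1,U^{j-1}t^{-j})$ (equations \eqref{bounds}, \eqref{bounds1'}) to obtain $\breve\Phi(t)\ll U^2/t^3$ and hence the error $O(U^2/y^{1/2})$. Your Fresnel-integral computation undoes the inner transform and expresses $\hat F$ directly in terms of $M(v)=\int\Phi(u^2+v^2)\,du$; two integrations by parts then give the sharper estimate $\hat F(\xi,\eta)\ll U/((\xi^2+\eta^2)^2 y)$, i.e.\ $\breve\Phi(t)\ll U/t^4$, and a final error $O(U/y)$. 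Your route is longer and requires justifying the conditionally convergent Fresnel interchange (e.g.\ by a Gaussian regulariser), but the payoff is a genuinely stronger bound in the range $y\ge 1$ that is actually used in $S_{\square,1}$. The paper's argument is shorter precisely because it recycles the decay estimates on $\widetilde\Phi^{(\mu)}$ proved earlier instead of returning to $\Phi$.

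One small gap: $O(U/y)$ does not imply $O(U^2/y^{1/2})$ uniformly for all $y>0$, only for $y\gg 1/U^2$. For smaller $y$ the lemma is trivial since the left side is $O(1)$ (each term $\widetilde\Phi(N(k)/y)\ll Uy^2/N(k)^2$) while $U^2/y^{1/2}\gg 1$; you should add a line to this effect so the statement holds as written.
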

\begin{proof}
   Note that
\begin{align*}
   \sum_{\substack {k \in \mz[i] \\ k \neq 0}}(-1)^{N(k)}\widetilde{\Phi}\left(\frac {N(k)}{y}\right)=\sum_{k \in \mz[i]}(-1)^{N(k)}\widetilde{\Phi}\left(\frac {N(k)}{y}\right)-\widetilde{\Phi}\left(0\right)
\end{align*}
and
\begin{align*}
    \sum_{k \in \mz[i]}(-1)^{N(k)}\widetilde{\Phi}\left(\frac {N(k)}{y}\right)=2 \sum_{k \in \mz[i]}\widetilde{\Phi}\left(\frac {2N(k)}{y}\right)-\sum_{k \in \mz[i]}\widetilde{\Phi}\left(\frac {N(k)}{y}\right).
\end{align*}
   By taking $n=1$ in Lemma \ref{Poissonsum}, we immediately obtain that
\begin{equation*}
    2 \sum_{k \in \mz[i]}\widetilde{\Phi}\left(\frac {2N(k)}{y}\right) = y\sum_{j \in
   \mz[i]}\breve{\Phi}\left(\sqrt{\frac {N(j)y}{2}}\right) \qquad \mbox{and} \qquad   \sum_{k \in \mz[i]}\widetilde{\Phi}\left(\frac {N(k)}{y}\right)  = y\sum_{j \in
   \mz[i]}\breve{\Phi}\left(\sqrt {N(j)y}\right),
\end{equation*}
   where
\begin{align*}
   \breve{\Phi}(t) =\int\limits^{\infty}_{-\infty}\int\limits^{\infty}_{-\infty}\widetilde{\Phi}(N(u+vi))\widetilde{e}\left(- t(u+vi)\right)\dif u \dif v, \quad t \geq 0.
\end{align*}

    It follows that
\begin{align}
\label{2.200}
    \sum_{k \in \mz[i]}(-1)^{N(k)}\widetilde{\Phi}\left(\frac {N(k)}{y}\right)=y\sum_{\substack{ j \in
   \mz[i] \\ (j, 1+i)=1}}\breve{\Phi}\left(\sqrt{\frac {N(j)y}{2}}\right).
\end{align}

    We have, when $t>0$, via integration by parts (and noting \eqref{bounds})
\begin{align*}
     \breve{\Phi}(t) & =\int\limits_{\mr^2}\cos (2\pi t y)\widetilde{\Phi}(x^2+y^2) \ \dif x \dif y=-\frac 1{\pi t} \int\limits_{\mr^2}\sin (2\pi t y)\widetilde{\Phi}'(x^2+y^2)y \ \dif x \dif y \\
    & =-\frac 1{2(\pi t)^2} \int\limits_{\mr^2}\cos (2\pi t y) \left( \widetilde{\Phi}'(x^2+y^2)+\widetilde{\Phi}''(x^2+y^2)2y^2 \right) \ \dif x \dif y \\
    & =\frac 1{4(\pi t)^3} \int\limits_{\mr^2}\sin (2\pi t y) \left( \widetilde{\Phi}''(x^2+y^2)6y+\widetilde{\Phi}'''(x^2+y^2)4y^3 \right) \ \dif x \dif y.
\end{align*}
   We can evaluate the last integral above with polar coordinates.  Using \eqref{bounds1'} for $\widetilde{\Phi}''$, $\widetilde{\Phi}'''$ if $r \leq 1$ and using \eqref{bounds} with $j=3$ for $\widetilde{\Phi}''$, $\widetilde{\Phi}'''$ if $r > 1$, we get
\begin{align*}
     \breve{\Phi}(t) \ll \frac {U^{2}}{t^3}.
\end{align*}
    The lemma follows immediately from this bound and \eqref{2.200}.
\end{proof}

    By a change of variables $k \mapsto k^2$ and noting that $k^2=k_1^2$ if and only if $k =\pm k_1$, we see that the terms $k=\square$, $k \neq 0$ in $S_M(X,Y; \hat{\phi}, \Phi)$ contribute
\begin{align*}
   S_{M, \square}  (X,Y; \hat{\phi}, \Phi) & =\frac {X}{4}\sum_{\substack{N(l) \leq Z \\ l \equiv 1 \bmod {(1+i)^3}}} \frac {\mu_{[i]}(l)}{N(l^2)} \sum_{\substack {(\varpi , l )=1 \\ \varpi \equiv 1 \bmod {(1+i)^3} }} \frac {\log N(\varpi)}{N(\varpi)}\hat{\phi} \left( \frac {\log N(
   \varpi)}{\log X} \right) \\
    & \hspace*{5cm} \times \sum_{\substack {k \in
   \mz[i] , k \neq 0 \\ (k, \varpi)=1}}(-1)^{N(k)}\widetilde{\Phi}\left(N(k)\sqrt{\frac {X}{2N(l^2\varpi)}}\right) \\
   &=S_{\square}-S'_{\square},
\end{align*}
   where
\begin{equation*}
\begin{split}
  S_{\square} = \frac {X}{4}\sum_{\substack {N(l) \leq Z \\ l \equiv 1 \bmod {(1+i)^3}}} \frac {\mu_{[i]}(l)}{N(l^2)} \sum_{\substack {(\varpi, l)=1 \\ \varpi \equiv 1 \bmod {(1+i)^3} }} \frac {\log N(\varpi)}{N(\varpi)} & \hat{\phi} \left( \frac {\log N(
   \varpi)}{\log X} \right) \\
   & \times \sum_{\substack {k \in
   \mz[i] \\ k \neq 0}}(-1)^{N(k)}\widetilde{\Phi}\left(N(k)\sqrt{\frac {X}{2N(l^2\varpi)}}\right)
   \end{split}
   \end{equation*}
and
\begin{equation*}
\begin{split}
   S'_{\square} =\frac {X}{4}\sum_{\substack{N(l) \leq Z \\ l \equiv 1 \bmod {(1+i)^3} }} \frac {\mu_{[i]}(l)}{N(l^2)} \sum_{\substack {(\varpi, l)=1 \\ \varpi \equiv 1 \bmod {(1+i)^3} }} \frac {\log N(\varpi)}{N(\varpi)} & \hat{\phi} \left( \frac {\log N(
   \varpi)}{\log X} \right) \\
    & \times \sum_{\substack {k \in \mz[i] \\ k \neq 0}}(-1)^{N(k)}\widetilde{\Phi}\left(N(k) \sqrt{\frac {X N(\varpi)}{2N(l^2)}}\right).
   \end{split}
   \end{equation*}

   To estimate $S'_{\square}$, \eqref{bounds} with $j=2$ gives
\begin{align*}
    \sum_{\substack {k \in
   \mz[i] \\ k \neq 0}}(-1)^{N(k)}\widetilde{\Phi}\left(N(k) \sqrt{\frac {X N(\varpi) }{2N(l^2)}}\right) \ll
   \sum_{\substack {k \in
   \mz[i] \\ N(k) \geq 1}} \frac {U N(l^2)}{N^2(k) X N(\varpi)} \ll \frac {U N(l^2)}{X N(\varpi)}.
\end{align*}

    From this we deduce that
\begin{align*}
   S'_{\square} \ll UZ.
\end{align*}

   Now we further rewrite $S_{\square}=S_{\square,1}+S_{\square,2}$ where
\begin{equation*}
\begin{split}
S_{\square,1} = \frac {X}{4}\sum_{\substack {N(l) \leq Z \\ l \equiv 1 \bmod {(1+i)^3}}} \frac {\mu_{[i]}(l)}{N(l^2)} \sum_{\substack {\varpi \equiv 1 \bmod {(1+i)^3} \\ (\varpi, l)=1 \\ N(\varpi) \geq X/N(l^2) }} \frac {\log N(\varpi)}{N(\varpi)} & \hat{\phi} \left( \frac {\log N(
   \varpi)}{\log X} \right) \\
   & \times \sum_{\substack {k \in
   \mz[i] \\ k \neq 0}}(-1)^{N(k)}\widetilde{\Phi}\left(N(k)\sqrt{\frac {X}{2N(l^2\varpi)}}\right),
   \end{split}
   \end{equation*}
   and
\begin{equation*}
\begin{split}
 S_{\square,2} =\frac {X}{4}\sum_{\substack {N(l) \leq Z \\ l \equiv 1 \bmod {(1+i)^3} }} \frac {\mu_{[i]}(l)}{N(l^2)} \sum_{\substack {\varpi \equiv 1 \bmod {(1+i)^3} \\ (\varpi, l)=1 \\ N(\varpi )< X/N(l^2) }} \frac {\log N(\varpi)}{N(\varpi)} & \hat{\phi} \left( \frac {\log N(
   \varpi)}{\log X} \right) \\
   & \times \sum_{\substack {k \in
   \mz[i] \\ k \neq 0 }}(-1)^{N(k)}\widetilde{\Phi}\left(N(k)\sqrt{\frac {X}{2N(l^2\varpi)}}\right).
   \end{split}
   \end{equation*}

    To estimate $S_{\square,2}$, we use \eqref{bounds} with $j=2$ to get that
\begin{align*}
    \sum_{\substack {k \in
   \mz[i] \\ k \neq 0}}(-1)^{N(k)}\widetilde{\Phi}\left(N(k)\sqrt{\frac {X}{2N(l^2\varpi)}}\right) \ll
   \sum_{\substack {k \in
   \mz[i] \\ k \neq 0}}U\frac {N(l^2\varpi)}{N^2(k)X} \ll U\frac {N(l^2\varpi)}{X} .
\end{align*}

    From this and \eqref{mer}, we obtain
\begin{align*}
   S_{\square,2}\ll X U \log \log X.
\end{align*}

    We now apply Lemma \ref{Poissonsumoverk} to see that $S_{\square,1}=S_{\square,M}+S_{\square,R}$, where
\[ S_{\square,M} = -\widetilde{\Phi}(0)\frac {X}{4}\sum_{\substack {N(l) \leq Z \\ l \equiv 1 \bmod {(1+i)^3} }} \frac {\mu_{[i]}(l)}{N(l^2)} \sum_{\substack {\varpi \equiv 1 \bmod {(1+i)^3} \\ (\varpi, l)=1 \\ N(\varpi) \geq X/N(l^2) }} \frac {\log N(\varpi)}{N(\varpi)}\hat{\phi} \left( \frac {\log N(
   \varpi)}{\log X} \right) ,  \]
   and
\[  S_{\square,R}   \ll  X^{1+1/4}U^{2}\sum_{\substack {N(l) \leq Z \\ l \equiv 1 \bmod {(1+i)^3} }} \frac {1}{N(l^2)^{1+1/4}} \sum_{\substack {\varpi \equiv 1 \bmod {(1+i)^3} \\ N(\varpi) \geq X/N(l^2) }} \frac {\log N(\varpi)}{N(\varpi)^{1+1/4}} . \]

    Using \eqref{mer} again, we arrive at
\begin{align*}
   S_{\square,R}  \ll X U^{2} \log \log X.
\end{align*}

   Let $\omega(l)$ denote the number of distinct primes in $\mz[i]$ dividing $l$. It is well-known that for $N(l) \geq 3$,
\begin{align*}
   \omega(l) \ll \frac {\log N(l)}{\log \log N(l)}.
\end{align*}

    It follows that
\begin{align*}
    S_{\square,M} &= -\widetilde{\Phi}(0)\frac {X}{4}\sum_{\substack {N(l) \leq Z \\ l \equiv 1 \bmod {(1+i)^3} }} \frac {\mu_{[i]}(l)}{N(l^2)} \sum_{\substack {\varpi \equiv 1 \bmod {(1+i)^3} \\ N(\varpi) \geq X/N(l^2) }} \frac {\log N(\varpi)}{N(\varpi)}\hat{\phi} \left( \frac {\log N(
   \varpi)}{\log X} \right)+O(X).
\end{align*}

Agan utilizing \eqref{mer} again, we get that
\begin{align*}
    S_{\square,M} &= -\widetilde{\Phi}(0)\frac {X}{4}  \sum_{\substack {N(l) \leq Z \\ l \equiv 1 \bmod {(1+i)^3} }}  \frac {\mu_{[i]}(l)}{N(l^2)} \int\limits^{\infty}_{X/N(l^2)} \hat{\phi} \left( \frac {\log u}{\log X} \right) \dif \log u+O(X \log \log X) \\
    &= -\widetilde{\Phi}(0)\frac {X\log X}{4} \sum_{\substack {N(l) \leq Z \\ l \equiv 1 \bmod {(1+i)^3} }} \frac {\mu_{[i]}(l)}{N(l^2)} \int\limits^{\infty}_{1- \log N(l^2)/ \log X} \hat{\phi}(t) \dif t+O(X \log \log X) \\
    &= -\widetilde{\Phi}(0)\frac {X\log X}{4} \sum_{\substack {N(l) \leq Z \\ l \equiv 1 \bmod {(1+i)^3} }}  \frac {\mu_{[i]}(l)}{N(l^2)} \int\limits^{\infty}_{1} \hat{\phi}(t) \dif t+O(X \log \log X) \\
    &= -\widetilde{\Phi}(0)\frac {X\log X}{3\zeta_{\mq(i)}(2)} \int\limits^{\infty}_{1} \hat{\phi}(t) \dif t+O\left( \frac {X \log X}{Z}+ X \log \log X \right) \\
    &= -\widetilde{\Phi}(0)\frac {X\log X}{6\zeta_{\mq(i)}(2)} \int\limits^{\infty}_{-\infty} \left( 1-\chi_{[-1,1]}(t) \right) \hat{\phi}(t) \dif t+O\left( \frac {X \log X}{Z}+ X \log \log X \right) \\
    &= -\frac {\pi X\log X}{6\zeta_{\mq(i)}(2)} \int\limits^{\infty}_{-\infty} \left( 1-\chi_{[-1,1]}(t) \right) \hat{\phi}(t) \dif t+O\left( \frac {X \log X}{Z}+\frac {X \log X}{U}+ X \log \log X \right),
\end{align*}
   where the last equality follows from \eqref{w0}. \newline

   Gathering the estimations for $S'_{\square}$, $S_{\square,2}$, $S_{\square,M}$ and $S_{\square,R}$, we obtain that
\begin{align*}
   S_{M, \square}(X,Y; \hat{\phi}, \Phi)=-\frac {\pi X\log X}{6\zeta_{\mq(i)}(2)} \int\limits^{\infty}_{-\infty} & \left( 1-\chi_{[-1,1]}(t) \right) \hat{\phi}(t) \ \dif t \\
   &+O \left( \frac {X \log X}{Z}+\frac {X \log X}{U}+ X U^{2} \log \log X+UZ \right).
\end{align*}

\subsection{Estimation of $S_{M}(X,Y; {\hat \phi}, \Phi)$, the remainder}
    Now, the sums in $S_{M}(X,Y; {\hat \phi}, \Phi)$
corresponding to the contribution of $k \neq 0$, $\square$ can be written as
$XR/2$, where $R$ is
\begin{align*}
 \sum_{\substack {N(l) \leq Z \\ l \equiv 1 \bmod {(1+i)^3} }} \frac {\mu_{[i]}(l)}{N(l^2)} \sum_{\substack {k \in
   \mz[i] \\ k \neq 0, \square}} & (-1)^{N(k)} \sum_{\varpi \equiv 1 \bmod {(1+i)^3}} \frac {\log N(\varpi)}{N(\varpi)}\leg {kl^2}{\varpi}\hat{\phi}\left( \frac {\log N(
   \varpi)}{\log X} \right) \widetilde{\Phi}\left(\sqrt{\frac {N(k)X}{2N(l^2\varpi)}}\right).
\end{align*}

   We define $\chi_{kl^2}$ to be $\leg {kl^2}{\cdot}$. Similar to our discussions in Section \ref{sect: Kronecker}, when $k$ is not a square, $\chi_{kl^2}$ can be regarded as a non-principle Hecke character modulo $(1+i)^5kl^2$ of trivial infinite type.  Analogous to \eqref{3.1}, one has the bound
\begin{align*}
  E(Y;\chi_{kl^2}, \hat{\phi}) \ll \log^{4} (X(N(kl^2)+2)).
\end{align*}

   We then deduce, by partial summation, that
\begin{align*}
\begin{split}
 \sum_{\varpi \equiv 1 \bmod {(1+i)^3}} & \frac {\log N(\varpi)}{N(\varpi)}\leg {kl^2}{\varpi}\hat{\phi} \left( \frac {\log N(
   \varpi)}{\log X} \right) \widetilde{\Phi}\left(\sqrt{\frac {N(k)X}{2N(l^2\varpi)}}\right) \\
   & =\int\limits^{Y}_1\frac {1}{\sqrt{V}}\widetilde{\Phi}\left(\sqrt{\frac {N(k)X}{2N(l^2)V}}\right) \dif E \left( V;\chi_{kl^2}, \hat{\phi} \right)
    \\
   & \ll \log^{4} \left( X(N(kl^2)+2) \right)
    \left( \frac {1}{\sqrt{Y}}
   \left |\widetilde{\Phi}\left(\sqrt{\frac {N(k)X}{2N(l^2)Y}}\right) \right |  +  \int\limits^{Y}_{1}\frac
   {1}{V^{3/2}}\left |\widetilde{\Phi}\left(\sqrt{\frac {N(k)X}{2N(l^2)V}}\right) \right |  \dif V \right. \\
   & \hspace*{2in} \left. + \int\limits^{Y}_{1}\sqrt{\frac {N(k)X}{N(l^2)}}\frac
   {1}{V^{2}} \left |\widetilde{\Phi}'\left(\sqrt{\frac {N(k)X}{2N(l^2)V}}\right) \right | \dif V \right).
\end{split}
\end{align*}

This gives raise to
\begin{align*}
  R  \ll \sum_{\substack {N(l) \leq Z \\ l \equiv 1 \bmod {(1+i)^3}}}\frac {1}{N(l^2)} \left( R_1+R_2+R_3 \right),
\end{align*}
    where
\[
  R_1  = \frac {1}{\sqrt{Y}} \sum_{\substack {k \in
   \mz[i] \\ k \neq 0}}\log^{4} (X(N(kl^2)+2))\left |\widetilde{\Phi}\left(\sqrt{\frac {N(k)X}{2N(l^2)Y}}\right) \right | , \]
\[   R_2  = \int\limits^{Y}_{1}\frac
   {1}{V^{3/2}}\sum_{\substack {k \in
   \mz[i] \\ k \neq 0}}\log^{4} (X(N(kl^2)+2))
   \left |\widetilde{\Phi}\left(\sqrt{\frac {N(k)X}{2N(l^2)V}}\right) \right |  \dif V, \]
and
\begin{equation*}
   R_3 = \int\limits^{Y}_{1}\sqrt{\frac {X}{N(l^2)}}\frac
   {1}{V^{2}} \sum_{\substack {k \in
   \mz[i] \\ k \neq 0}}\log^{4} (X(N(kl^2)+2)) \sqrt{N(k)} \left |\widetilde{\Phi}'\left(\sqrt{\frac {N(k)X}{2N(l^2)V}}\right) \right |\dif V.
\end{equation*}

    Similar to the estimations done in Section 3.3 of \cite{G&Zhao2}, we use \eqref{bounds1'} when $VN(l^2)/X \geq 1, N(k) \leq VN(l^2)/X$ and \eqref{bounds} when $VN(l^2)/X \leq 1$ or $VN(l^2)/X \geq 1, N(k) \geq YN(l^2)/X$ with $j=3$ or $4$.  These estimates give
\begin{equation*}
  R \ll \frac {\log^4 X Z \sqrt{Y}U^{3}}{X}.
\end{equation*}

Thus we conclude that the contribution of $k \neq 0, \square$ is
\begin{equation}
\label{error4}
 \ll  \log^4 X Z \sqrt{Y}U^{3}.
\end{equation}

\subsection{Conclusion }  We now combine the bounds \eqref{error1},
\eqref{error4} and take $Y=X^{2-2\varepsilon}, Z=\log^5 X$ (recall
that $U=\log \log X$) with any fixed $\varepsilon>0$ to obtain
\begin{align*}
  S(X, Y;\hat{\phi}, \Phi )
&=-\frac {\pi X\log X}{6\zeta_{\mq(i)}(2)} \int\limits^{\infty}_{-\infty} \left( 1-\chi_{[-1,1]}(t) \right) \hat{\phi}(t) \ \dif t  \\
&\hspace*{1cm}+O\left( \frac {X \log X}{Z}+\frac {X \log X}{U}+ X U^{2} \log \log X+U Z +\frac {X \log^{5} X}{Z} +\log^4 X Z \sqrt{Y}U^{3} \right) \\
& = -\frac {\pi X\log X}{6\zeta_{\mq(i)}(2)} \int\limits^{\infty}_{-\infty} \left( 1-\chi_{[-1,1]}(t) \right) \hat{\phi}(t) \ \dif t+o\left( X\log X \right),
\end{align*}
 which implies \eqref{01.50} and this completes the proof of Theorem \ref{quadraticmainthm}.

\section{Proof of Theorem \ref{quarticmainthm}}

   The proof of Theorem \ref{quarticmainthm} is similar to that of Theorem \ref{quadraticmainthm}. We define
    \[ S(X,Y; \hat{\phi}, \Phi), \; S_M(X,Y; \hat{\phi}, \Phi), \; S_R(X,Y; \hat{\phi}, \Phi) \]
     in this section similar to those defined in Section \ref{Section 3}, with the necessary modifications on characters,  as we replace $\chi_{i(1+i)^5c}$ by $\chi_{(1+i)^7c}$ in the definition of $S(X,Y; \hat{\phi}, \Phi)$ and we replace
$\leg {i(1+i)c}{\varpi}$ by $\leg {(1+i)^3c}{\varpi}_4$ in the definition of $S_M(X,Y; \hat{\phi}, \Phi)$, $S_R(X,Y; \hat{\phi}, \Phi)$ here. We assume that $\hat{\phi}(u)$ is smooth and has its support contained in the interval $(-20/19+\varepsilon, 20/19-\varepsilon)$ for some $0<\varepsilon<1$. We also set $Y=X^{20/19-\varepsilon}$ throughout this section. Similar to the proof of Theorem \ref{quadraticmainthm}, we see that in order to establish Theorem \ref{quarticmainthm}, it suffices to show that
\begin{align*}
  \lim_{X \rightarrow \infty} \frac{S(X, Y;\hat{\phi}, \Phi)}{X \log X}=0.
\end{align*}

  The relation $S(X,Y; \hat{\phi}, \Phi)=S_M(X,Y; \hat{\phi}, \Phi)+S_R(X,Y; \hat{\phi}, \Phi)$ and the estimation \eqref{error1} for $S_R(X,Y;
    \hat{\phi}, \Phi)$ is still valid. To estimate $S_M(X,Y; \hat{\phi}, \Phi)$, we recast it as
\begin{align*}
  S_{M}(X,Y; \hat{\phi}, \Phi)   = \sum_{\varpi \equiv 1 \bmod {(1+i)^3}} & \frac {\log N(\varpi)}{\sqrt{N(\varpi)}} \leg{(1+i)^3}{\varpi}_4\hat{\phi} \left( \frac {\log N(
   \varpi)}{\log X} \right) \\
    & \hspace*{1cm}  \times \sum_{\substack {N(l) \leq Z \\ l \equiv 1 \bmod {(1+i)^3} }} \mu_{[i]}(l)\leg {l^2}{\varpi}_4   \sum_{\substack {c \in \mz[i] \\ (c, 1+i)=1} } \leg {c}{\varpi}_4\Phi \left( \frac {N(cl^2)}{X} \right). \nonumber
\end{align*}

    Applying Corollary~\ref{Poissonsumformodd} , we obtain that
\begin{align*}
   \sum_{\substack {c \in \mz[i] \\ (c, 1+i)=1} } \leg {c}{\varpi}_4\Phi \left( \frac {N(cl^2)}{X} \right) & =\frac {X}{2N(l^2\varpi )} \leg {1+i}{\varpi}_4\sum_{k \in
   \mz[i]}(-1)^{N(k)}g_4(k, \varpi)\widetilde{W}\left(\sqrt{\frac {N(k)X}{2N(l^2\varpi)}}\right) \\
   & =\frac {X}{2N(l^2\varpi )} \leg {1+i}{\varpi}_4\sum_{k \in
   \mz[i]}(-1)^{N(k)} \overline{\leg{k}{\varpi}}_4 g_4(\varpi)\widetilde{W}\left(\sqrt{\frac {N(k)X}{2N(l^2\varpi)}}\right),
\end{align*}
    as one checks easily from Lemma \ref{quarticGausssum} that
\begin{align*}
    g_4(k, \varpi)=\overline{\leg {k}{\varpi}}_4 g_4(\varpi).
\end{align*}

   We can now rewrite $S_M(X,Y; \hat{\phi}, \Phi)$ as
\begin{equation}
\label{4.02}
\begin{split}
    & S_{M}(X,Y; \hat{\phi}, \Phi)  \\
   =& \frac {X}{2}\sum_{\substack {N(l) \leq Z \\ l \equiv 1 \bmod {(1+i)^3} }} \frac {\mu_{[i]}(l)}{N(l^2)} \sum_{\substack{ k \in
   \mz[i] \\ k \neq 0}}(-1)^{N(k)} \\
   & \hspace*{2cm} \times \sum_{\varpi \equiv 1 \bmod {(1+i)^3}} \frac {\log N(\varpi)}{N(\varpi)^{3/2}}\overline{\leg {kl^2}{\varpi}}_4 g_4(\varpi)\hat{\phi} \left( \frac {\log N(\varpi)}{\log X} \right)\widetilde{W}\left(\sqrt{\frac {N(k)X}{2N(l^2\varpi)}}\right).
   \end{split}
\end{equation}

\subsection{Average of quartic Gauss sums at prime arguments}
\label{section: smooth Gauss}

     For any ray class character $\chi \pmod {16}$, we let
\begin{align*}
   h(r,s;\chi)=\sum_{\substack{(n,r)=1 \\ n \equiv 1 \bmod {(1+i)^3} }}\frac {\chi(n)g_4(r,n)}{N(n)^s}.
\end{align*}

   The following lemma, a consequence of \cite[Lemma, p. 200]{P}, gives the analytic behavior of $h(r,s;\chi)$ for $\Re(s) >1$.
\begin{lemma}{\cite[Lemma 2.5]{G&Zhao1}}
\label{lem1} The function $h(r,s;\chi)$ has meromorphic continuation to the entire complex plane. It is holomorphic in the
region $\sigma=\Re(s) > 1$ except possibly for a pole at $s = 5/4$. For any $\varepsilon>0$, letting $\sigma_1 = 3/2+\varepsilon$, then for $\sigma_1 \geq \sigma \geq \sigma_1-1/2$, $|s-5/4|>1/8$, we have
\[ h(r,s;\chi) \ll N(r)^{\frac 12(\sigma_1-\sigma+\varepsilon)}(1+t^2)^{ \frac {3}{2}(\sigma_1-\sigma+\varepsilon)}, \]
  where $t=\Im(s)$. Moreover, the residue satisfies
\[ \mathrm{Res}_{s=5/4}h(r,s;\chi) \ll N(r)^{1/8+\varepsilon}. \]
\end{lemma}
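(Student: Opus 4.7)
The plan is to leverage Patterson's deep analytic results on the Dirichlet series built from quartic Gauss sums \cite{P}, and then extract the quantitative bounds by a Phragm\'en--Lindel\"of interpolation.

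First, I would observe that the defining series for $h(r,s;\chi)$ is absolutely convergent in the half-plane $\Re(s) > 3/2$. Indeed, combining the fact that $g_4(r,n)=0$ for many of the prime-power contributions with $(r,n)=1$ (see Lemma \ref{quarticGausssum}) with the Weil bound $|g_4(\varpi)| = N(\varpi)^{1/2}$ at the surviving primary primes yields $|h(r,s;\chi)|\ll 1$ on the line $\sigma=\sigma_1 = 3/2+\varepsilon$. This both establishes holomorphicity in that half-plane and supplies the $\sigma = \sigma_1$ endpoint of the desired convexity bound.

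Second, I would invoke Patterson's theorem from \cite[p.~200]{P} to obtain the meromorphic continuation of $h(r,s;\chi)$ to all of $\mathbb{C}$, together with a functional equation relating $h(r,s;\chi)$ to an analogous series evaluated at $3/2-s$. In the half-plane $\Re(s) > 1$, the only pole admitted by that theorem is the well-known simple pole at $s=5/4$ coming from the residual spectrum of the associated metaplectic Eisenstein series. Combining this functional equation with Stirling's asymptotic for the gamma factors, and applying the absolute convergence bound just obtained to the reflected Dirichlet series, I would derive a bound of shape $h(r,s;\chi) \ll N(r)^{1/4+\varepsilon}(1+|t|)^{3/2+\varepsilon}$ on the line $\sigma = \sigma_1 - 1/2$. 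The exponent $1/4$ on $N(r)$ and $3/2$ on $|t|$ are precisely the reflection-symmetric contributions of the conductor and archimedean gamma factors.

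Third, a direct application of Phragm\'en--Lindel\"of on the strip $\sigma_1 - 1/2 \leq \sigma \leq \sigma_1$, after excising a small disc around $s=5/4$ (justified by the hypothesis $|s-5/4| > 1/8$ which keeps $h(r,s;\chi)$ holomorphic in the relevant region), interpolates linearly between the two endpoint exponents and yields exactly the claimed bound. For the residue assertion, I would read off Patterson's explicit identification of $\mathrm{Res}_{s=5/4}h(r,s;\chi)$ as essentially an $r$th Fourier coefficient of a quartic theta function, whose known $N(r)^{1/8+\varepsilon}$ majorant is precisely the saving of $1/8$ predicted by his theory. The main obstacle is not any single analytic step in isolation but the bookkeeping required to propagate the ramified local factors at primes dividing $r(1+i)$ through the functional equation uniformly in $r$; this is exactly what Patterson's apparatus delivers, so faithfully transcribing his argument (as is done in \cite[Lemma 2.5]{G&Zhao1}) would substitute for a long independent derivation.
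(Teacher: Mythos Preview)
Your proposal is correct and aligns with the intended approach: the paper does not supply an independent proof of this lemma but simply cites it from \cite[Lemma 2.5]{G&Zhao1}, noting that it is a consequence of \cite[Lemma, p.~200]{P}. Your sketch---absolute convergence on $\sigma=\sigma_1$, Patterson's meromorphic continuation and functional equation to bound the reflected line, Phragm\'en--Lindel\"of interpolation, and Patterson's identification of the residue with a Fourier coefficient of the quartic theta series---is exactly the argument underlying that cited lemma.
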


   We derive from the above lemma the following:
\begin{lemma}
\label{lem3} Let $(b, 1+i)=1$. For any $d  \in \mz[i]$, we have
\begin{align} \label{lem4.3eq}
 \sum_{\substack {N(c) \leq x \\ c \equiv 1 \bmod {(1+i)^3} \\ c \equiv 0 \bmod {b}}} \overline{\leg {d}{c}}_4 g_4(c)N(c)^{-1/2}
 \ll N(d)^{1/10}N(b)^{-3/5}x^{4/5+\varepsilon}+N(d)^{1/8+\varepsilon}N(b)^{-1/2+\varepsilon}x^{3/4+ \varepsilon} .
\end{align}
\end{lemma}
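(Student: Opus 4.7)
The plan is to treat
\[
S(x) \;:=\; \sum_{\substack{N(c)\leq x \\ c\equiv 1\bmod (1+i)^3 \\ b\mid c}} \overline{\leg{d}{c}}_4\, g_4(c)\, N(c)^{-1/2}
\]
by Mellin inversion: bound the associated Dirichlet series via Lemma~\ref{lem1}, shift past the pole at $s=5/4$, and balance the residue against the optimized shifted integral. The residue will give the $N(d)^{1/8+\varepsilon}N(b)^{-1/2+\varepsilon}x^{3/4+\varepsilon}$ term while the optimized shifted integral will give the $N(d)^{1/10}N(b)^{-3/5}x^{4/5+\varepsilon}$ term. WLOG assume $(d,b)=1$, since otherwise $S(x)=0$ (as $\leg{d}{c}_4=0$ whenever $(d,c)>1$, and $b\mid c$ forces $(d,c)\geq(d,b)$). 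By \eqref{eq:gmult}, $\overline{\leg{d}{c}}_4 g_4(c)=g_4(d,c)$ when $(d,c)=1$, so I introduce
\[
B(s) \;:=\; \sum_{\substack{c\equiv 1\bmod (1+i)^3 \\ b\mid c,\ (d,c)=1}} \frac{g_4(d,c)}{N(c)^{s+1/2}},
\]
absolutely convergent for $\Re(s)>1$. Truncated Perron at $c_0=1+\varepsilon$ and height $T\geq 1$ yields
\[
S(x) \;=\; \frac{1}{2\pi i}\int_{c_0-iT}^{c_0+iT} B(s)\,\frac{x^s}{s}\,ds \;+\; O\!\left(\frac{x^{1+\varepsilon}}{T\,N(b)}\right),
\]
with the $N(b)^{-1}$ saving in the error inherited from the sparsity of multiples of $b$.

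\textbf{Reduction to $h$.} I write $c=bm$ with $(b,m)=1$: by \eqref{2.04}, the Gauss sum vanishes on the complementary set since $(d,b)=1$. By \eqref{2.03},
\[
g_4(d,bm) \;=\; \leg{m}{b}_4\leg{b}{m}_4\, g_4(d,b)\,g_4(d,m),
\]
and quartic reciprocity for primary $m,b$ gives $\leg{m}{b}_4\leg{b}{m}_4 = \leg{b}{m}\cdot(-1)^{\frac{N(m)-1}{4}\cdot\frac{N(b)-1}{4}}$, the sign being a character of $m$ modulo a power of $(1+i)$. Combining the identity $\leg{b}{m}\,g_4(d,m)=g_4(db^2,m)$ (using $\overline{\leg{b^2}{m}}_4=\leg{b}{m}$ and \eqref{eq:gmult}) with orthogonality of ray class characters $\chi\bmod 16$ to encode both the primary condition $m\equiv b^{-1}\bmod (1+i)^3$ and the reciprocity sign, exhibits $B(s)$ as a finite linear combination of
\[
\frac{g_4(d,b)}{N(b)^{s+1/2}}\; h\!\left(db^2,\; s+\tfrac12;\; \chi\right).
\]

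\textbf{Contour shift and optimization.} By Lemma~\ref{lem1}, each $h(db^2,s+\tfrac12;\chi)$ is meromorphic on $\Re(s)\in[1/2+\varepsilon,\,1+\varepsilon]$ with a single possible pole at $s=3/4$. Shifting the line of integration to $\Re(s)=\sigma\in(1/2,3/4)$ collects the residue, which contributes
\[
\ll \frac{|g_4(d,b)|}{N(b)^{5/4}}\cdot N(db^2)^{1/8+\varepsilon}\,x^{3/4} \;\ll\; N(d)^{1/8+\varepsilon}\,N(b)^{-1/2+\varepsilon}\,x^{3/4+\varepsilon},
\]
via $|g_4(d,b)|\leq N(b)^{1/2}$ and $N(db^2)=N(d)N(b)^2$. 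The remaining integral on $\Re(s)=\sigma$ is, by the convexity estimate in Lemma~\ref{lem1}, bounded by $N(d)^{(1-\sigma)/2}\,N(b)^{1-2\sigma}\,x^\sigma\,T^{3(1-\sigma)/2}\log T$. Balancing this against the Perron error $x^{1+\varepsilon}/(T\,N(b))$ and optimizing with $\sigma=5/7$ yields the desired bound $N(d)^{1/10}N(b)^{-3/5}x^{4/5+\varepsilon}$.

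\textbf{Main obstacle.} The hardest step is the reduction to $h$: the divisibility $b\mid c$, the coprimality $(d,c)=1$, the quartic reciprocity, and the primary condition on $m$ must all be tracked so that the effective argument of $h$ is exactly $db^2$ and the extracted prefactor is exactly $g_4(d,b)/N(b)^{s+1/2}$. This sharp bookkeeping, combined with the $N(b)^{-1}$ saving in the Perron truncation, is what delivers the $N(b)^{-1/2+\varepsilon}$ in the residue bound and the $N(b)^{-3/5}$ in the shifted-integral bound. Given the reduction, the contour shift and the parameter optimization are routine.
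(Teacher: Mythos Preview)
Your approach is essentially the same as the paper's: both apply Perron's formula, reduce the resulting Dirichlet series to $h(db^{2},s;\chi)$ via the twisted multiplicativity \eqref{2.03} of $g_4$, shift past the pole, and balance the residue against the shifted integral and the Perron truncation. Your direct reduction (observing that $g_4(d,c)$ vanishes whenever $c$ is not squarefree, so one may restrict to $(b,m)=1$ at once) is a clean alternative to the inclusion--exclusion over $\delta\mid b$ that the paper inherits from Patterson.

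There is, however, a computational slip in your optimization step. Lemma~\ref{lem1} gives $h(db^{2},s';\chi)\ll N(db^{2})^{(\sigma_1-\sigma')/2}(1+t^{2})^{3(\sigma_1-\sigma')/2}$, and since $(1+t^{2})^{\alpha}\asymp |t|^{2\alpha}$, the vertical integral on $\Re(s)=\sigma$ picks up a factor $T^{\,3(1-\sigma)}$, not $T^{\,3(1-\sigma)/2}$ as you wrote. With the correct exponent the optimal move is to push $\sigma$ all the way to $\tfrac12+\varepsilon$ (the left edge of the convexity range), exactly as the paper does; balancing
\[
N(d)^{1/4}\,x^{1/2+\varepsilon}\,T^{3/2}\quad\text{against}\quad \frac{x^{1+\varepsilon}}{T\,N(b)}
\]
with $T=\bigl(x/(N(b)^{2}N(d)^{1/2})\bigr)^{1/5}$ then yields $N(d)^{1/10}N(b)^{-3/5}x^{4/5+\varepsilon}$. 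Your choice $\sigma=5/7$ happens to give the same final bound only because the erroneous exponent $T^{3(1-\sigma)/2}$ exactly compensates; had you pushed to $\sigma=\tfrac12$ with your stated $T$-power you would have obtained the impossibly strong $x^{5/7+\varepsilon}$. Correcting the $T$-exponent and taking $\sigma=\tfrac12+\varepsilon$ makes the argument rigorous and brings it into line with the paper's proof.
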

\begin{proof}
   This follows essentially from the proof of \cite[Proposition 1, p. 198]{P} with a few modifications. Using the notations in \cite{P}, in the inclusion-exclusion type estimation of $\psi$ (the first expression below \cite[Lemma, p. 200]{P}), one needs to replace the estimation for a corresponding $h((b/\delta)^{n-2},s;\chi)$ by $h(d(b/\delta)^{n-2},s;\chi)$ (using \eqref{eq:gmult}).  Lemma \ref{lem1} then yields (with $n=4$ in our case)
\begin{align*}
  \psi & \ll N(d)^{(n-2)(3/2-\Re(s)+\varepsilon)/4}N(b)^{3n/4-1-n \Re(s)/2+2\varepsilon}(1+|s|^2)^{\text{Card}\sum_{\infty}(k) \cdot  (n/2-1/2)(3/2-\Re(s)+\varepsilon)}, \\
  \text{Res}_{s=1+1/n}\psi & \ll N(d)^{ (n/4-1/2)(1/2-1/n+\varepsilon)}N(b)^{n/4-3/2+\varepsilon}.
\end{align*}

  Using this and Perron's formula, we see that the ``horizontal" integrals can be estimated by observing that the estimates for the integrand (with $\sigma=\Re(s)$)
\begin{align*}
   N(d)^{(n/4-1/2)(1-\sigma+\varepsilon)}N(b)^{n/2-1-n \sigma/2+\varepsilon}T^{(n-1) \text{Card}\sum_{\infty}(k) (1-\sigma+\varepsilon)-1}X^{\sigma}
\end{align*}
   are monotonic in $\sigma$ and can therefore be estimated as the sum of the values at $\sigma=c- 1/2$ and $\sigma=c$, where $c=1+\varepsilon$. Thus, we obtain that the ``horizontal" integrals are
   \begin{align*}
\ll  N(d)^{(n-2)/8}N(b)^{n/4-1}T^{(n/2-1/2) \text{Card}\sum_{\infty}(k) -1}X^{1/2+\varepsilon} + N(b)^{-1}T^{-1}X^{1+\varepsilon}.
\end{align*}

The ``vertical" integral can be estimated by
\begin{align*}
   \ll N(d)^{ (n/8-1/4)}N(b)^{n/4-1} & X^{1/2+\varepsilon} \int\limits^{T}_{-T}(1+t^2)^{(n/4-1/4) \text{Card}\sum_{\infty}(k) -1/2} \dif t \\
   & \ll N(d)^{(n/8-1/4)}N(b)^{n/4-1}T^{(n/2-1/2) \text{Card}\sum_{\infty}(k)}X^{1/2+\varepsilon}.
\end{align*}

   Together, these estimates yield that the left-hand side of \eqref{lem4.3eq} is
\begin{align*}
 \ll N(d)^{(n/8-1/4)}N(b)^{n/4-1}& T^{(n/2-1/2) \text{Card}\sum_{\infty}(k)}X^{1/2+\varepsilon} \\
 & +N(b)^{-1}T^{-1}X^{1+\epsilon}+
N(d)^{(n-2)(1/2-1/n+\varepsilon)/4}N(b)^{n/4-3/2+\varepsilon}X^{1/2+1/n+ \varepsilon} .
\end{align*}

   With $R=2+(n-1) \text{Card}\sum_{\infty}(k)$, we now take
\begin{align*}
   T= \left( \frac {X}{N(b)^{n/2}N(d)^{(n-2)/4}} \right)^{1/R}
\end{align*}
   to get the desired result.
\end{proof}

We now use this Lemma~\ref{lem3} instead of Proposition 1 of \cite[p. 198]{P} in the sieve identity in Section 4 of \cite{P} (note that in our case \cite[Proposition 2, p. 206]{P} is still valid) to get that
\begin{equation} \label{4.2}
\begin{split}
  \sum_{\substack {n \equiv 1 \pmod {(1+i)^3} \\ N(n) \leq x}} & \overline{\leg {kl^2}{n}}_4 \frac {g_4(n) \Lambda(n)}{\sqrt{N(n)}} \\
   & \ll x^{4\varepsilon} \left( N(kl^2)^{(n-2)/(4R)} \left( \frac {x}{u_3} \right)^{n/(2R)} x^{1-1/R} \right. \\
    & \hspace*{2cm} \left. +N(kl^2)^{(n/4-1/2)(1/2-1/n+\varepsilon)}\left( \frac {x}{u_3} \right)^{n/4-1/2}x^{1/2+1/n}+x^{1-1/20}+xu_1^{-1/5} \right)  \\
   &\ll x^{4\varepsilon} \left( N(kl^2)^{1/10}x^{1-1/10}+N(kl^2)^{1/8+\varepsilon}x^{1-1/8}+x^{1-1/20} \right).
\end{split}
\end{equation}
where we take $u_3=x/u_1, u_1=x^{1/4}/8$ as in \cite{P} and we note that $n=4$, $R=5$ in our case. \newline

Since $g_4(n)=0$ if $n$ is not square-free, the proper prime powers contribute nothing to the left-hand side of \eqref{4.2}.  Thus we have
\begin{align*}
   E(x;k, l) :=\sum_{\substack {N(\varpi) \leq x \\ \varpi \equiv 1 \bmod {(1+i)^3} }} & \overline{\leg {kl^2}{\varpi}}_4 \frac {g_4(\varpi) \Lambda(\varpi)}{\sqrt{N(\varpi)}} \\
   & \ll x^{\varepsilon} \left( N(kl^2)^{1/10}x^{1-1/10}+N(kl^2)^{1/8+\varepsilon}x^{1-1/8}+x^{1-1/20} \right).
\end{align*}

\subsection{Estimation of $S_{M}(X,Y; {\hat \phi}, \Phi)$}

   It follows from partial summation that
\begin{align*}
&  \sum_{\varpi \equiv 1 \bmod {(1+i)^3}} \frac {\log N(\varpi)}{N(\varpi)^{3/2}}\overline{\leg {kl^2}{\varpi}}_4 g_4(\varpi)\hat{\phi} \left( \frac {\log N(
   \varpi)}{\log X} \right) \widetilde{W}\left(\sqrt{\frac {N(k)X}{2N(l^2\varpi)}}\right) \\
   &=\int\limits^Y_1\frac {1}{V}\hat{\phi}\left( \frac {\log V}{\log X} \right) \widetilde{W}\left(\sqrt{\frac {N(k)X}{2N(l)^2V}} \right ) \dif E(V; k, l) \\
   & \ll \frac {E(Y; k, l)}{Y}\hat{\phi}\left( \frac {\log Y}{\log X} \right) \widetilde{W}\left(\sqrt{\frac {N(k)X}{2N(l)^2Y}} \right )+\int\limits^Y_1\frac {E(V; k, l)}{V^2}\left| \hat{\phi}(\frac {\log V}{\log X}) \widetilde{W}\left(\sqrt{\frac {N(k)X}{2N(l)^2V}} \right ) \right| \dif V\\
   & \hspace*{3cm} +\frac 1{\log X}\int\limits^Y_1\frac {E(V; k, l)}{V^2}\left| \hat{\phi}' \left( \frac {\log V}{\log X} \right) \widetilde{W}\left(\sqrt{\frac {N(k)X}{2N(l)^2V}} \right ) \right| \dif V \\
    & \hspace*{3cm} +\sqrt{\frac {N(k)X}{N(l)^2}}\int\limits^Y_1\frac {E(V; k, l)}{V^{5/2}}\left| \hat{\phi} \left( \frac {\log V}{\log X} \right) \widetilde{W}'\left(\sqrt{\frac {N(k)X}{2N(l)^2V}} \right ) \right| \dif V.
\end{align*}

    It follows that
\begin{equation*}
\begin{split}
   \sum_{\substack{ k \in
   \mz[i] \\ k \neq 0}}(-1)^{N(k)}\sum_{\varpi \equiv 1 \bmod {(1+i)^3}} \frac {\log N(\varpi)}{N(\varpi)^{3/2}}\overline{\leg {kl^2}{\varpi}}_4 g(\varpi)& \hat{\phi} \left( \frac {\log N(\varpi)}{\log X} \right) \widetilde{W}\left(\sqrt{\frac {N(k)X}{2N(l^2\varpi)}}\right) \\
   & \ll  R_1+R_2+R_3+R_4,
   \end{split}
\end{equation*}
    where
\begin{equation*}
  R_1 =\sum_{\substack {k \in
   \mz[i] \\ k \neq 0}} \frac {M(Y)}{Y}\hat{\phi}\left( \frac {\log Y}{\log X} \right) \widetilde{W}\left(\sqrt{\frac {N(k)X}{2N(l)^2Y}} \right ),
   \end{equation*}
   \begin{equation*}
   R_2 =\int\limits^Y_1\sum_{\substack {k \in
   \mz[i] \\ k \neq 0}} \frac {M(V)}{V^2} \left| \hat{\phi}\left( \frac {\log V}{\log X} \right) \widetilde{W}\left(\sqrt{\frac {N(k)X}{2N(l)^2V}} \right ) \right|  \dif V,
   \end{equation*}
   \begin{equation*}
   R_3 =\frac 1{\log X}\int\limits^Y_1\sum_{\substack {k \in
   \mz[i] \\ k \neq 0}}\frac {M(V)}{V^2}\left| \hat{\phi}'\left( \frac {\log V}{\log X} \right) \widetilde{W}\left(\sqrt{\frac {N(k)X}{2N(l)^2V}} \right ) \right|  \dif V,
   \end{equation*}
   and
   \begin{equation*}
   R_4=\int\limits^Y_1\sum_{\substack {k \in
   \mz[i] \\ k \neq 0}} \sqrt{\frac {N(k)X}{N(l)^2}}\frac {M(V)}{V^{5/2}}\left| \hat{\phi} \left( \frac {\log V}{\log X} \right) \widetilde{W}'\left(\sqrt{\frac {N(k)X}{2N(l)^2V}} \right ) \right| \dif V,
\end{equation*}
with
\[ M(W) = N(kl^2)^{1/10} W^{1-1/10+\varepsilon} + N(kl^2) W^{1-1/8+\varepsilon} + W^{1-1/20+\varepsilon} . \]
We use \eqref{bounds1'} when $VN(l^2)/X \geq 1, N(k) \leq VN(l^2)/X$ and \eqref{bounds} when $VN(l^2)/X \leq 1$ or $VN(l^2)/X \geq 1, N(k) \geq YN(l^2)/X$ with $j=4$ to arrive at
\begin{align*}
   R_4 & \ll \int\limits^{X/N(l^2)}_1\sum_{\substack {k \in
   \mz[i] \\ N(k) \geq 1}}\frac {M(V)}{V^{5/2}} \sqrt{\frac {N(k)X}{N(l)^2}}\left (\frac {N(l)^2V}{N(k)X} \right )^{2}U^{3} \ \dif V \\
   & \hspace*{3cm} +\int\limits^Y_{X/N(l^2)} \sum_{\substack {k \in
   \mz[i] \\ 0< N(k) \leq VN(l^2)/X}} \sqrt{\frac {N(k)X}{N(l)^2}}\frac {M(V)}{V^{5/2}} \dif V \\
   &\hspace*{3cm} +\int\limits^Y_{X/N(l^2)}\sum_{\substack {k \in
   \mz[i] \\  N(k) \geq VN(l^2)/X}} \frac {M(V)}{V^{5/2}}\sqrt{\frac {N(k)X}{N(l)^2}}\left (\frac {N(l)^2V}{N(k)X} \right )^{2}U^{3} \ \dif V \\
   & \ll  \frac {N(l^2)^{6/5}Y^{1+\varepsilon}U^{3}}{X^{11/10}}+\frac {N(l^2)^{5/4+\varepsilon}Y^{1+\varepsilon}U^{3}}{X^{9/8+\varepsilon}}+\frac {N(l^2)Y^{19/20+\varepsilon}U^{3}}{X}.
\end{align*}

   The estimations for $R_1$,$R_2$ and $R_3$ are similar. We then conclude from \eqref{error1} and \eqref{4.02} that
\begin{equation*}
   S(X,Y; {\hat \phi}, \Phi)
    \ll X \left( \frac {Z^{6/5}Y^{1+\varepsilon}U^{3}}{X^{11/10}}+\frac {Z^{5/4+\varepsilon}Y^{1+\varepsilon}U^{3}}{X^{9/8+\varepsilon}}+\frac {Z Y^{19/20+\varepsilon}U^{3}}{X}+\frac {\log^{5} X}{Z} \right) = o\left( X \log X \right),
\end{equation*}
   when $U=\log \log X$, $Z=\log^5 X$. This completes the proof of Theorem \ref{quarticmainthm}. \newline

\noindent{\bf Acknowledgments.} P. G. is supported in part by NSFC grant 11371043  and L. Z. by the FRG grant PS43707.  Parts of this work were done when P. G. visited the University of New South Wales (UNSW) in June 2017. He wishes to thank UNSW for the invitation, financial support and warm hospitality during his pleasant stay.

\bibliography{biblio}
\bibliographystyle{amsxport}

\vspace*{.5cm}

\noindent\begin{tabular}{p{8cm}p{8cm}}
School of Mathematics and Systems Science & School of Mathematics and Statistics \\
Beihang University & University of New South Wales \\
Beijing 100191 China & Sydney NSW 2052 Australia \\
Email: {\tt penggao@buaa.edu.cn} & Email: {\tt l.zhao@unsw.edu.au} \\
\end{tabular}

\end{document}